\newtheorem{theorem}{Theorem}
\numberwithin{theorem}{section}
\newtheorem{lemma}[theorem]{Lemma}
\newtheorem{proposition}[theorem]{Proposition}
\newtheorem{corollary}[theorem]{Corollary}
\theoremstyle{definition}
\newtheorem{definition}[theorem]{Definition}
\newtheorem{example}[theorem]{Example}
\newtheorem{remark}[theorem]{Remark}
\newtheorem{question}[theorem]{Question}
\DeclareMathOperator{\coker}{coker}
\DeclareMathOperator{\im}{im}
\DeclareMathOperator{\diag}{diag}
\DeclareMathOperator{\pdet}{pdet}
\DeclareMathOperator{\rank}{rank}
\DeclareMathOperator{\op}{{op}}
\DeclareMathOperator{\sgn}{sgn}
\DeclareMathOperator{\Sd}{Sd}
\DeclareMathOperator{\Bd}{Bd}
\DeclareMathOperator{\Trace}{trace}
\DeclareMathOperator{\one}{\mathbf{1}}
\DeclareMathOperator{\RowB}{RowB}
\DeclareMathOperator{\ColB}{ColB}
\newcommand{\isom}{\cong}
\newcommand{\excise}[1]{}
\newcommand{\half}{\frac{1}{2}}
\newcommand{\tr}{t}
\newcommand{\opendualblock}{D^\circ}
\newcommand{\dualblock}{D}
\newcommand{\Ss}{\mathbb{S}}
\newcommand{\Zz}{\mathbb{Z}}
\newcommand{\HH}{\tilde H}
\newcommand{\x}{\times}
\newcommand{\xx}{\mathbf{x}}
\newcommand{\yy}{\mathbf{y}}
\newcommand{\zz}{\mathbf{z}}
\newcommand{\bd}{\partial}
\newcommand{\sm}{\setminus}
\newcommand{\ZZ}{\mathbb{Z}}
\newcommand{\QQ}{\mathbb{Q}}
\newcommand{\RR}{\mathbb{R}}
\newcommand{\PPP}{\mathcal{P}}
\newcommand{\Hom}{\operatorname{Hom}}
\newcommand{\Ext}{\operatorname{Ext}}
\author{Jeremy L.\ Martin}
\address{Department of Mathematics, University of Kansas, Lawrence, KS 66045}
\email{jlmartin@ku.edu}
\author{Molly Maxwell}
\address{Flathead Valley Community College, Kalispell, MT 59901}
\email{mmaxwell@fvcc.edu}
\author{Victor Reiner}
\address{School of Mathematics, University of Minnesota, Minneapolis MN 55455}
\email{reiner@math.umn.edu}
\author{Scott O.\ Wilson}
\address{Department of Mathematics, Queens College, CUNY, Queens, NY 11367}
\email{Scott.Wilson@qc.cuny.edu}
\thanks{J.L.\ Martin was supported in part by a Simons Foundation Collaboration Grant and by National Security Agency grant no. H98230-12-1-0274.
V. Reiner was supported by NSF grant DMS-1001933.}
\date{\today}
\subjclass[2010]{05C30, 05C05, 05E45, 15A15}
\keywords{pseudodeterminant, spanning tree, Laplacian,
Dirac operator, perfect square, central reflex, self-dual}
\title{Pseudodeterminants and perfect square spanning tree counts}
\begin{document}
\maketitle

\begin{abstract}
The pseudodeterminant \(\textrm{pdet}(M)\) of a square matrix is the last nonzero coefficient in its characteristic polynomial; for a nonsingular matrix, this is just the determinant.  If \(\partial\) is a symmetric or skew-symmetric matrix then \(\textrm{pdet}(\partial\partial^t)=\textrm{pdet}(\partial)^2\).  Whenever \(\partial\) is the \(k^{th}\) boundary map of a self-dual CW-complex \(X\), this linear-algebraic identity implies that the torsion-weighted generating function for cellular \(k\)-trees in \(X\) is a perfect square.  In the case that \(X\) is an \emph{antipodally} self-dual CW-sphere of odd dimension, the pseudodeterminant of its \(k\)th cellular boundary map can be interpreted directly as a torsion-weighted generating function both for \(k\)-trees and for \((k-1)\)-trees, complementing the analogous result for even-dimensional spheres given by the second author.  The argument relies on the topological fact that any self-dual even-dimensional CW-ball can be oriented so that its middle boundary map is skew-symmetric.
\end{abstract}
\section{Introduction}
\label{intro-section}

This paper is about generating functions for higher-dimensional spanning tree
via determinants of combinatorial Laplacians, and when they are
perfect squares.  Combinatorial Laplacians, broadly interpreted, are matrices 
of the form $L=\bd \bd^\tr$ where $\bd$ is a matrix in $\ZZ^{n \times m}$,
or perhaps even $R^{n \times m}$ where $R$ is an integral domain containing
$\ZZ$ along with indeterminates used as weights.
Often $L$ is singular, so that instead of considering
the determinant $\det(L)$, one first creates an invertible
{\it reduced Laplacian} by striking out some rows
and columns.

We will explore a useful alternative approach using the notion of 
\emph{pseudodeterminant} \cite{Knill, Minka}, appearing perhaps earliest
in work of Adin \cite[Theorem 3.4]{Adin}.
One first defines the rank $r$ of $L$ by extending scalars to the 
fraction field $K$ of the domain $R$.
Thus $L$ will have $r$ nonzero eigenvalues $\lambda_1,\ldots,\lambda_r$ 
in the algebraic closure $\overline{K}$ of
$K$, leading to two expansions for its
unsigned characteristic polynomial:
\begin{align}
\label{principal-minor-expansion}
\det(t\one + L) &= \sum_{I \subseteq \{1,2,\ldots,n\}} t^{n-|I|} \det(L_{I,I}) 
             =t^n + \Trace(L) t^{n-1} + \cdots + \det(L) t^0 \\
\label{eigenvalue-expansion}
&= \prod_{i=1}^r (t+\lambda_i)
\end{align}
where $L_{I,I}$ is the {\it principal square submatrix} of $L$ indexed by row and column
indices in the subset $I$.

\begin{definition}  
The \emph{pseudodeterminant} $\pdet(L)$ is the last nonzero coefficient in the unsigned characteristic polynomial of $L$.
That is,
\begin{equation}
\label{pdet-definition}
\pdet(L) := 
  \sum_{\substack{I \subseteq [n]: \\ |I| = r}} \det(L_{I,I})
= \prod_{i=1}^r \lambda_i.
\end{equation}
\end{definition}

\noindent
Thus for nonsingular $L$ one has $\pdet(L)=\det(L)$, and when $L$ is of rank one it has $\pdet(L)=\Trace(L)$.
The pseudodeterminant is also the leading coefficient in
the {\it Fredholm determinant} $\det(\one + tL)$ 
for the scaled operator $tL$, that is,
$\det(\one + tL)=\prod_{i=0}^n (1+ t\lambda_i)
= \prod_{i=0}^r (1+ t\lambda_i)= \pdet(L) t^r + O(t^{r-1})$;
see, e.g., Simon \cite[Chap. 3]{Simon}.

Section~\ref{char-poly-section} quickly reviews and reformulates
the well-known expansion of $\det(t \one + L)$ via the Binet-Cauchy theorem. 
Section~\ref{Dirac-section} digresses
to explain a general perfect square phenomenon occurring if $\bd^2=0$: 
\begin{equation}
\pdet(\bd \bd^\tr) = \pdet( \bd+\bd^\tr) ^2.
\end{equation}
This happens because $\bd + \bd^\tr$ plays the 
role of a combinatorial {\it Dirac operator},
whose square gives a symmetrized Hodge-theoretic version of
the combinatorial Laplacian:
\begin{equation}
\label{Dirac-squared-is-L}
(\bd+\bd^\tr)^2 = (\bd^2 + \bd\bd^\tr + \bd^\tr\bd + (\bd^2)^\tr)
                        = \bd\bd^\tr + \bd^\tr\bd.
\end{equation}

Section~\ref{symmetry-section} proves a simple linear algebra result,
Theorem~\ref{perfect-square-theorem},
about the situation whenever $\bd$ in $\ZZ^{n \times n}$
is symmetric or skew-symmetric, that is, $\bd^\tr=\pm \bd$: one then has
\begin{equation} \label{pdet-squared}
\pdet(\bd \bd^\tr) = (\pdet\bd)^2.
\end{equation}
When $\bd$ is the $i^{th}$ boundary map of a CW-complex~$X$, the summands in \eqref{pdet-definition} can be interpreted in terms of \emph{cellular spanning trees}. This theory stems from the work of Kalai \cite{Kalai} and Bolker \cite{Bolker}
and has been developed in recent work such as \cite{Adin, BajoBurdickChmutov, DKM-Simplicial,  DKM-Cellular,  DKM-Cutflow, KrushkalRenardy, Lyons, Maxwell}; we review it briefly in Section~\ref{trees-section}.  In the special case that $\bd$ is symmetric or skew-symmetric, the linear-algebraic identity \eqref{pdet-squared} says that the torsion-weighted number $\tau_i(X)$ of \emph{cellular spanning trees} of~$X$,
\[\tau_i(X) :=\sum_{i\text{-trees }T\subseteq X} |\HH_{i-1}(T)|^2,\]
is a perfect square, as is a more general generating function for $i$- and $(i-1)$-trees.
  
The second author \cite{Maxwell} explicated results of Tutte \cite{Tutte} and a question of Kalai \cite[\S 7, Prob.~3]{Kalai}, by showing
that certain even-dimensional antipodally self-dual CW-spheres have spanning tree
counts that are perfect squares, with a combinatorially significant square root.  A goal of this paper is to prove similar results for odd-dimensional CW-spheres.

In Section~\ref{duality-section}, we consider cellular $d$-balls $S$ whose face posets are self-dual (a relatively weak self-duality condition), with no constraint on the parity of their dimension.  Using Alexander duality, we show (Proposition~\ref{Alexander-dual-higher-dimensional-trees}) that $\tau_i(S)=\tau_{d-1-i}(S)$, as well as an analogous formula for weighted tree counts.  These results were observed by Kalai \cite{Kalai} for simplices, and our proof in the general case is based on Kalai's ideas.  A consequence of these results is that the pseudodeterminant of the (weighted or unweighted) middle Laplacian of
a self-dual complex is always a perfect square (Corollary~\ref{perfect-square-corollary}).

In Section~\ref{antipodally-self-dual-section}, we study the specific case of antipodally self-dual odd-dimensional spheres $S\isom\Ss^{2k-1}$.  Such a sphere can always be oriented so that the middle boundary map $\bd=\bd_k$ satisfies $\bd^\tr=(-1)^k \bd$.  Together with Theorem~\ref{perfect-square-theorem}, this implies that $\pdet(\bd)$ has a direct combinatorial interpretation as $\tau_k(S) = \tau_{k-1}(S)$; the weighted analogue of this statement is also valid (Theorem~\ref{antipodally-self-dual-square-root-theorem}).  The construction of the required orientation is technical and is deferred to an appendix (Section~\ref{antipodally-self-dual-appendix}), although it can be made combinatorially explicit for certain spheres including polygons and boundaries of simplices.

\section{Unsigned characteristic polynomials of Laplacians}
\label{char-poly-section}

Let $R$ be an integral domain, and let $\bd\in R^{n\x m}$ (that is, $\bd$ is an $n\x m$ matrix over $R$).  Let $I \subseteq [n]:=\{1,2,\ldots,n\}$ be a set of row indices and let
$J \subseteq [m]:=\{1,2,\ldots,m\}$ be a set of column indices.
The pair $I,J$ determines an submatrix $\bd_{I,J}$ in $R^{|I| \times |J|}$.
Label row and column indices of $\bd$ in $R^{n \times m}$ with 
indeterminates $\xx:=(x_1,\ldots,x_n)$ and $\yy:=(y_1,\ldots,y_m)$.
Let $X=\diag(\xx)$ be the square diagonal matrix having $\xx$ as its
diagonal entries, and likewise let $Y=\diag(\yy)$.
For subsets
$I \subseteq [n]$ and $J \subseteq [m]$, 
define monomials
$\xx^I:=\prod_{i \in I} x_i$
and
$\yy^J:=\prod_{j \in J} y_j.$

The following elementary proposition is well known.  However, we were unable to find an explicit reference in the literature for its weighted version, so we include a proof for the sake of completeness.

\begin{proposition} 
\label{weighted-Kelmans-expansion}
Every matrix $\bd\in R^{n \times m}$ satisfies
\begin{equation}
\label{Kelmans-expansion}
\det(t\one + \bd \bd^\tr) 
  = \sum_{ \substack{ I\subseteq[n],\;J\subseteq[m]:\\ |I|=|J|} } 
         t^{n-|I|} \left( \det \bd_{I,J} \right)^2
\end{equation}
and more generally, the weighted operator $L:=X^{\half} \bd Y \bd^\tr X^{\half}\in (R[\xx,\yy])^{n\x n}$ satisfies
\begin{equation} \label{weighted-expansion}
\det(t\one + L ) 
  = \sum_{ \substack{ I\subseteq[n],\;J\subseteq[m]:\\ |I|=|J|} } 
           t^{n-|I|} \xx^I \yy^J \left( \det \bd_{I,J} \right)^2.
\end{equation}
In particular, if $\bd$ has rank $r$, then
\begin{equation}
\label{weighted-pdet-expansion}
\pdet(L) =
\sum_{ \substack{ I\subseteq[n],\;J\subseteq[m]:\\ |I|=|J|=r} } 
         \xx^I \yy^J \left( \det \bd_{I,J} \right)^2.
\end{equation}
\end{proposition}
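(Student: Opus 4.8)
The plan is to prove the most general identity \eqref{weighted-expansion} first, since \eqref{Kelmans-expansion} is the specialization $\xx=\yy=\one$ and \eqref{weighted-pdet-expansion} then follows immediately by extracting the lowest-degree coefficient in $t$. The key tool is the Cauchy-Binet theorem, which computes the determinant of a product of rectangular matrices as a sum over square submatrices. My approach is therefore to expand $\det(t\one + L)$ via the principal-minor expansion \eqref{principal-minor-expansion}, and then apply Cauchy-Binet to each principal minor of $L$.

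Concretely, I would start from
\[
\det(t\one + L) = \sum_{I \subseteq [n]} t^{n-|I|} \det(L_{I,I}),
\]
which is exactly the expansion in \eqref{principal-minor-expansion} applied to $L$ in place of the unweighted Laplacian. The task is thus to show that each principal minor satisfies $\det(L_{I,I}) = \sum_{J:\,|J|=|I|} \xx^I \yy^J (\det \bd_{I,J})^2$. To see this, first observe that $L_{I,I}$ is the principal submatrix of $L = X^{\half}\bd Y \bd^\tr X^{\half}$ on the rows and columns indexed by $I$; since $X^{\half}$ and $X$ are diagonal, this submatrix factors as $L_{I,I} = X_I^{\half}\, \bd_{I,[m]}\, Y\, (\bd^\tr)_{[m],I}\, X_I^{\half}$, where $X_I^{\half} = \diag(x_i^{1/2})_{i \in I}$. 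Taking determinants, the two diagonal factors contribute $\det(X_I^{\half})^2 = \xx^I$, and I am left to compute $\det(\bd_{I,[m]}\, Y\, \bd_{I,[m]}^\tr)$.

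The heart of the matter is to apply Cauchy-Binet to this last determinant. Absorbing $Y = \diag(\yy)$ by writing $Y = Y^{\half} Y^{\half}$ and regrouping, the product $\bd_{I,[m]} Y \bd_{I,[m]}^\tr$ equals $(\bd_{I,[m]} Y^{\half})(\bd_{I,[m]} Y^{\half})^\tr$, so Cauchy-Binet gives
\[
\det\bigl(\bd_{I,[m]} Y \bd_{I,[m]}^\tr\bigr)
= \sum_{\substack{J \subseteq [m] \\ |J| = |I|}} \det\bigl((\bd_{I,[m]} Y^{\half})_{[|I|],J}\bigr)^2.
\]
Each column $j$ of $\bd_{I,[m]} Y^{\half}$ is the $j$th column of $\bd_{I,[m]}$ scaled by $y_j^{1/2}$, so selecting columns $J$ and taking the determinant pulls out a factor $\prod_{j \in J} y_j^{1/2} = (\yy^J)^{1/2}$, whose square is $\yy^J$, leaving the minor $\det \bd_{I,J}$. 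Combining these pieces yields $\det(L_{I,I}) = \sum_{J:\,|J|=|I|} \xx^I \yy^J (\det \bd_{I,J})^2$, and substituting back into the principal-minor expansion produces \eqref{weighted-expansion}. Note also that $\det \bd_{I,J} = 0$ unless $|I| = |J|$, so the index set is correctly constrained.

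I expect the main obstacle to be bookkeeping rather than conceptual: one must carefully track the diagonal weight factors through the factorization and confirm that the half-powers $x_i^{1/2}$ and $y_j^{1/2}$ recombine into the honest monomials $\xx^I$ and $\yy^J$ upon squaring, so that the final identity lives in the polynomial ring $R[\xx,\yy]$ with no residual square roots. A minor subtlety is that $Y^{\half}$ and $X^{\half}$ involve square roots of indeterminates, so strictly speaking the computation takes place over an extension ring adjoining these roots; but since every term in the final answer is a genuine monomial in $\xx,\yy$, the identity descends to $R[\xx,\yy]$, and by continuity/polynomial identity it holds without needing the roots at all. Once \eqref{weighted-expansion} is established, setting $\xx = \yy = \one$ gives \eqref{Kelmans-expansion}, and picking out the coefficient of the lowest surviving power $t^{n-r}$ — which is nonzero precisely because $\rank \bd = r$ guarantees some minor $\det \bd_{I,J}$ with $|I|=|J|=r$ is nonzero — yields \eqref{weighted-pdet-expansion}.
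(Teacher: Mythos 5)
Your proposal is correct and takes essentially the same approach as the paper: both expand $\det(t\one + L)$ by the principal-minor formula \eqref{principal-minor-expansion} and then apply Cauchy--Binet with the diagonal weights split into half-powers so that the two minors recombine into $\xx^I\yy^J(\det\bd_{I,J})^2$. The only cosmetic difference is that the paper absorbs the weights into a single matrix $Z = X^{\half}\bd Y^{\half}$ and applies Cauchy--Binet to $L = ZZ^\tr$ directly, whereas you factor $\xx^I$ out of each principal submatrix $L_{I,I}$ before invoking Cauchy--Binet on the $Y$-weighted part.
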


\begin{proof}
Let
$Z:=X^{\half} \bd Y^{\half}$,
so that
$L:=X^{\half} \bd Y \bd^\tr X^{\half}=Z Z^\tr$.
The Binet-Cauchy identity gives 
$$
\det L_{I,I}
= \sum_{J \subseteq [m]:\ |J|=|I|}
            \det Z_{I,J} \det Z^\tr_{J,I}
$$
and the principal minor expansion 
\eqref{principal-minor-expansion} gives
\begin{align*}
\det(t \one + L )
  &~=~ \sum_{I \subseteq [n]} t^{n-|I|} \det L_{I,I} 
   ~=~ \sum_{I \subseteq [n]}  t^{n-|I|}
     \sum_{J \subseteq [m]:\ |J|=|I|} 
           \det Z_{I,J} \det Z^\tr_{J,I}\\
  &~=~ \sum_{\substack{I\subseteq[n],\;J\subseteq [m]:\\ |J|=|I|}} t^{n-|I|}
       \left( (\xx^I)^{\half} \cdot \det\bd_{I,J} \cdot (\yy^J)^{\half} \right)  
       \left( (\yy^I)^{\half} \cdot \det\bd^\tr_{I,J} \cdot (\xx^J)^{\half} \right) \\  
  &~=~ \sum_{\substack{I\subseteq[n],\;J\subseteq [m]:\\ |J|=|I|}} 
          t^{n-|I|} \xx^I \yy^J \left( \det \bd_{I,J} \right)^2,
\end{align*}
proving~\eqref{weighted-expansion}. Setting $x_i=y_j=1$ for all $i,j$ recovers~\eqref{Kelmans-expansion}.
To obtain~\eqref{weighted-pdet-expansion}, note that $R[\xx,\yy]$
is an integral domain, and $\bd, \bd \bd^\tr, L$ all have rank~$r$.
\end{proof}

\noindent
The nonzero summands in \eqref{weighted-pdet-expansion}
are those for which $\bd_{I,J}$ is nonsingular.
Accordingly, we can reformulate the
summation indices in \eqref{weighted-pdet-expansion}.

\begin{definition}
\label{row-column-basis-definition}
For a domain $R$ and 
$\bd$ in $R^{n \times m}$, say that a subset of row indices $I \subseteq [n]$ 
forms a {\it row basis for $\bd$} if, after extending
scalars to the fraction field $K$ of $R$, the rows of $\bd$
indexed by $I$ give a $K$-vector space basis for the row space of $\bd$.
Similarly define for $J \subseteq [m]$
what it means to be a {\it column basis for $\bd$}.
We will write $\RowB(\bd)$ and $\ColB(\bd)$ for the set of row and column bases,
respectively.
\end{definition}

\begin{proposition}
\label{row-basis-column-basis-prop}
(cf. \cite[Chap. 4, Exer. 2.5]{Artin})
Let $R$ be a domain and $\bd$ in $R^{n \times m}$ of rank $r$,
and let $I\subseteq[n]$, $J\subseteq[m]$ have $|I|=|J|=r$.
Then the submatrix $\bd_{I,J}$ is nonsingular if and only if 
both $I$ is a row basis and $J$ is a column basis for $\bd$.
\end{proposition}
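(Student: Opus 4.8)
The plan is to first extend scalars to the fraction field $K$ of $R$, after which every quantity in the statement---the rank $r$, the notions of row and column basis, and the (non)singularity of $\bd_{I,J}$---becomes a purely linear-algebraic statement about $\bd$ viewed in $K^{n\times m}$. So I would assume from the outset that $R=K$ is a field. The governing observation is that the submatrix $\bd_{I,J}$ factors as a composite of linear maps $K^J \xrightarrow{\ \iota_J\ } K^m \xrightarrow{\ \bd\ } K^n \xrightarrow{\ \pi_I\ } K^I$, where $\iota_J$ is the coordinate inclusion and $\pi_I$ the coordinate projection; that is, $\bd_{I,J} = \pi_I \circ \bd \circ \iota_J$. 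Since $\bd_{I,J}$ is a square $r\times r$ matrix, its nonsingularity is equivalent to injectivity of this composite.

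The forward direction is the easy one. If $\bd_{I,J}$ is nonsingular, its $r$ rows are linearly independent; but these rows are exactly the coordinate-$J$ truncations of the rows of $\bd$ indexed by $I$, and any linear dependence among the full rows would restrict to a dependence among the truncations. Hence the rows of $\bd$ indexed by $I$ are themselves independent, and being $r$ independent vectors inside the $r$-dimensional row space they form a basis, so $I\in\RowB(\bd)$. The identical argument applied to columns (equivalently, applied to $\bd^\tr$) gives $J\in\ColB(\bd)$.

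For the converse I would establish injectivity of $\pi_I\circ\bd\circ\iota_J$ by peeling off the two outer maps. First, $J\in\ColB(\bd)$ says precisely that the columns of $\bd$ indexed by $J$ are independent, which is the statement that $\bd\circ\iota_J$ is injective with image the full column space $C:=\im\bd$ of dimension $r$. It then remains to show that $\pi_I$ restricted to $C$ is injective. This is the step I expect to be the main obstacle, because the hypothesis $I\in\RowB(\bd)$ concerns rows while the desired conclusion concerns the column space. The bridge is a rank--nullity computation: writing $\bd_{I,[m]}$ for the full rows of $\bd$ indexed by $I$, one has $\pi_I(\bd u)=\bd_{I,[m]}\,u$ for all $u\in K^m$, so $\pi_I|_C$ is injective if and only if $\ker\bd_{I,[m]}\subseteq\ker\bd$. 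The reverse inclusion is automatic, so this holds exactly when $\rank\bd_{I,[m]}=\rank\bd=r$; and as $\bd_{I,[m]}$ has exactly $r$ rows, that rank condition is equivalent to those rows being independent---that is, to $I\in\RowB(\bd)$.

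Assembling the pieces, when both $I\in\RowB(\bd)$ and $J\in\ColB(\bd)$ the two outer maps $\bd\circ\iota_J\colon K^J\to C$ and $\pi_I|_C\colon C\to K^I$ are injective, so their composite $\bd_{I,J}$ is injective, hence nonsingular. I note finally that the roles of $I$ and $J$ are interchanged under transposition, so one could derive the column-basis half of each direction from the row-basis half by replaying the argument for $\bd^\tr$, which would shorten the write-up.
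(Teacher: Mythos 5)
Your proof is correct and follows essentially the same route as the paper: both rest on the factorization $\bd_{I,J} = \pi_I \circ \bd \circ \iota_J$ through the column space $\im\bd$ after extending scalars to $K$. The paper packages both implications at once by observing that a composite of maps between $r$-dimensional spaces is nonsingular if and only if each factor is, whereas you argue the two directions separately and spell out, via rank--nullity, the identification of injectivity of $\pi_I$ restricted to $\im\bd$ with $I$ being a row basis---a step the paper leaves implicit.
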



\begin{proof}
Extending scalars from $R$ to $K$, factor the $K$-linear map
$\bd_{I,J}:K^J \rightarrow K^I$ into 
$\bd_{I,J}=\beta\circ\alpha$ as follows:
$$
\xymatrix{
K^J \ar@{=}[dd] \ar@{^{(}->}[r] \ar@(ul,ur)[rrrr]^{\bd_{I,J}} & K^m \ar@{>>}[dr] \ar[rr]^{\bd} & & K^n \ar@{>>}[r]^{\pi} & K^I  \ar@{=}[dd]\\
 & & \im\bd \ar@{=}[d] \ar@{^{(}->}[ur]^{i} & & \\
K^r \ar^{\alpha}[rr]    &                 & K^r \ar^{\beta}[rr] &            & K^r \\
}
$$
In the top row, the first horizontal inclusion $K^J \hookrightarrow K^m$ pads a vector in $K^J$ with extra zero coordinates outside of $J$ to create 
a vector in $K^m$, while the last horizontal surjection 
$K^n \twoheadrightarrow K^I$ forgets the coordinates outside
of $I$.  The factorization $\bd_{I,J}=\beta\circ\alpha$ 
shows that $\bd_{I,J}$ is nonsingular if and only if 
both $\alpha$ and $\beta$ are nonsingular, that is, if and only if both
$J$ is a column basis and $I$ is a row basis for $\bd$.
\end{proof}

Proposition~\eqref{row-basis-column-basis-prop} allows us to rewrite equation~\eqref{weighted-pdet-expansion} as follows:
\begin{equation}
\label{reformulated-weighted-pdet-expansion}
\pdet(L) 
  = \sum_{\substack{I\in\RowB(\bd)\\ J\in\ColB(\bd)}}
         \xx^I \yy^J \left( \det \bd_{I,J} \right)^2.
\end{equation}

\section{Digression:  Pseudodeterminants and Laplacians as squares of Dirac operators}
\label{Dirac-section}

This section will not be used in the sequel.
We first collect a few easy properties of pseudodeterminants analogous to 
properties of determinants,  then apply them 
to show why the pseudodeterminant of the
{\it Dirac operator} $\bd + \bd^\tr$, defined for 
$\bd$ in $\ZZ^{n \times n}$ satisfying $\bd^2=0$,
agrees up to $\pm$ sign with 
the pseudodeterminant for either of the Laplace operators
$\bd \bd^\tr$ or $\bd^\tr \bd$.

As usual, $R$ will be a domain, with fraction field $K$ having 
algebraic closure $\overline{K}$.

\begin{proposition}(cf.\ Knill \cite[Prop.~2]{Knill}) 
\label{pdet-properties}
For $L$ in $R^{n \times n}$, one has the following.
\begin{enumerate}
\item[(a)]
$\pdet(L^\tr)=\pdet(L)$.
\item[(b)]
$\pdet(L^k)=\pdet(L)^k$ for $k=1,2,\ldots$.
\item[(c)]
If $A,B$ lie in $R^{n \times m}, R^{m \times n}$, respectively,
then $\pdet(AB)=\pdet(BA)$.
\item[(d)]
If $L,M$ in $R^{n \times n}$ are mutually annihilating (i.e., $LM=0=ML$), then 
$$
\pdet(L+M) = \pdet(L) \pdet(M).
$$
\end{enumerate}
\end{proposition}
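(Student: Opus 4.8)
The plan is to work over the algebraic closure $\overline{K}$ of the fraction field $K$ and to use the characterization from \eqref{pdet-definition} that $\pdet(L)$ is the product $\prod_{i=1}^{r}\lambda_i$ of the nonzero eigenvalues of $L$, counted with multiplicity (equivalently, the last nonzero coefficient of $\det(t\one + L)$). Since each of (a)--(d) is an identity among elements of $R$, and $R\subseteq K\subseteq\overline{K}$, it suffices to verify it after extending scalars to $\overline{K}$, where eigenvalue and triangularization arguments are available.

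Parts (a), (b), (c) are then quick consequences of standard facts. For (a), the matrices $L$ and $L^\tr$ have the same characteristic polynomial, since $\det(t\one + L) = \det((t\one + L)^\tr) = \det(t\one + L^\tr)$; hence they have the same nonzero eigenvalues and the same last nonzero coefficient. (Equivalently, $(L^\tr)_{I,I} = (L_{I,I})^\tr$ has the same determinant as $L_{I,I}$, leaving the sum in \eqref{pdet-definition} unchanged.) For (b), the map $\mu \mapsto \mu^k$ sends $0$ to $0$ and nonzero scalars to nonzero scalars, so the nonzero eigenvalues of $L^k$ are exactly the $\lambda_i^k$; thus $\pdet(L^k) = \prod_i \lambda_i^k = (\prod_i \lambda_i)^k = \pdet(L)^k$. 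For (c), Sylvester's identity $t^{m}\det(t\one_n + AB) = t^{n}\det(t\one_m + BA)$ shows that the characteristic polynomials of $AB$ and $BA$ agree up to a power of $t$, so the two (square, of sizes $n$ and $m$) matrices share the same nonzero eigenvalues with multiplicities, and hence the same last nonzero coefficient.

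The substantive case is (d). Because $LM = 0 = ML$, the matrices $L$ and $M$ commute, so over the algebraically closed field $\overline{K}$ they can be simultaneously upper-triangularized; in such a basis let $(\ell_1,\ldots,\ell_n)$ and $(m_1,\ldots,m_n)$ denote their respective diagonals, which are their eigenvalue multisets. The product $LM$ is then upper triangular with diagonal $(\ell_i m_i)_i$, and the hypothesis $LM = 0$ forces $\ell_i m_i = 0$ for every $i$. Hence at each index at most one of $\ell_i, m_i$ is nonzero, so the nonzero entries of the diagonal $(\ell_i + m_i)_i$ of $L + M$ are precisely the disjoint union of the nonzero $\ell_i$ and the nonzero $m_i$; taking the product of nonzero eigenvalues gives $\pdet(L+M) = \pdet(L)\,\pdet(M)$. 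I expect the one point needing care to be the appeal to simultaneous triangularization of commuting matrices, together with the observation that the diagonal of a product of triangular matrices is the entrywise product of their diagonals; both are standard, and the relation $LM = 0$ is exactly what converts the additive statement about diagonals into the desired multiplicative statement about pseudodeterminants.
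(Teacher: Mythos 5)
Your proposal is correct and follows essentially the same route as the paper's own proof: parts (a)--(c) via the transpose invariance of the characteristic polynomial, the behavior of nonzero eigenvalues under powers, and the Sylvester/Schmid identity relating $\det(t\one+AB)$ to $\det(t\one+BA)$; and part (d) via simultaneous triangularization of the commuting matrices $L,M$ over $\overline{K}$, with $LM=0$ forcing at most one of each diagonal pair to be nonzero. There are no gaps.
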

\begin{proof}
Assertion (a) follows from 
$\det(t \one + L^\tr ) = \det\left((t \one + L)^\tr \right)=\det(t \one + L)$.

Assertion (b) follows since if $L$ has nonzero eigenvalues $\lambda_1,\ldots,\lambda_r$, then
$L^k$ has nonzero eigenvalues $\lambda_1^k,\ldots,\lambda_r^k$.

Assertion (c) comes from a well-known determinant fact
(see, e.g., \cite{Schmid}) asserting that, if $n \geq m$, then 
$\det(t \one + AB) =t^{n-m} \det(t \one + BA)$.

Assertion (d) will follow by making a change of coordinates
in $\overline{K}^n$ that simultaneously triangularizes the mutually
annihilating (and hence mutually commuting) matrices $L,M$.
Thus without loss of generality, $L$ and $M$ are triangular and
have as their ordered lists of diagonal entries their eigenvalues
$(\lambda_1,\ldots,\lambda_n), (\mu_1,\ldots,\mu_n)$.  
Their sum $L+M$ is then triangular, with
eigenvalues $(\lambda_1+\mu_1,\ldots,\lambda_n+\mu_n)$.
However, since either product
$ML$ or $LM$ is also triangular, with eigenvalues $(\lambda_1 \mu_1,\ldots,\lambda_n \mu_n)$, the
mutual annihilation $0=LM=ML$ implies that at most one of each 
pair $\{\lambda_i,\mu_i\}$ can be nonzero.  Hence if $L,M$ have ranks $r,s$, respectively, then the eigenvalues for $L+M$ can be reindexed as
$(\lambda_1,\ldots,\lambda_r,\mu_{r+1},\ldots,\mu_{r+s},0,0,\ldots,0)$
and hence
$$
\pdet(L+M)=\lambda_1 \cdots \lambda_r \cdot \mu_{r+1}\cdots \mu_{r+s}
=\pdet(L) \pdet(M). \qedhere
$$
\end{proof}

\begin{definition}
For $\bd$ in $\ZZ^{n \times n}$ with $\bd^2=0$,
its {\it Dirac operator} is the symmetric matrix
$\bd + \bd^\tr$.
\end{definition}
\noindent
The reader is referred to Friedrich \cite{Friedrich} for background on
Dirac operators in Riemannian geometry.  

As noted in equation \eqref{Dirac-squared-is-L} in the Introduction,
this operator $\bd + \bd^\tr$ has square given by
$$
\Delta := (\bd+\bd^\tr)^2 = (\bd^2 + \bd\bd^\tr + \bd^\tr\bd + (\bd^2)^\tr)
                        = \bd\bd^\tr + \bd^\tr\bd
$$
which is another form of combinatorial Laplacian, arising in 
discrete {\it Hodge theory} over $\RR$; see, e.g.,
Friedman \cite{Friedman}. The subspace
of {\it harmonics} $H:=\ker \Delta \subseteq \RR^n$ gives a canonical choice of representatives
for the homology $\ker\bd /\im \bd$, due to the orthogonal {\it Hodge decomposition} picture:
$$
\begin{matrix}
\RR^n                              &= &\im\bd^\tr & \oplus & H & \oplus &\im \bd\\
\ker\bd=\ker\bd^\tr\bd                     &= &              &        & H & \oplus &\im \bd\\
\ker\bd^\tr=\ker\bd\bd^\tr                   &= &\im\bd^\tr & \oplus & H &        &            \\
\ker\bd \cap \ker\bd^\tr &= &              &        & H.&        &            \\
\end{matrix}
$$

\begin{corollary}
\label{Dirac-interpretation}
Let  $\bd$ in $\ZZ^{n \times n}$ be such that 
$\bd^2=0$.  Then its Dirac operator $\bd+\bd^\tr$ satisfies
\[\pdet(\bd+\bd^\tr) ~=~\pm \pdet(\bd \bd^\tr) 
~=~
  \left( \pm \sum_{\substack{I\in\RowB(\bd)\\J\in\ColB(\bd)}}
         \left( \det \bd_{I,J} \right)^2 \right).
\]
\end{corollary}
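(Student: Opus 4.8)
The plan is to combine the three multiplicative properties of the pseudodeterminant recorded in Proposition~\ref{pdet-properties} with the identity $\Delta := (\bd + \bd^\tr)^2 = \bd\bd^\tr + \bd^\tr\bd$ established in~\eqref{Dirac-squared-is-L}. Abbreviate $D := \bd + \bd^\tr$, $L := \bd\bd^\tr$, and $M := \bd^\tr\bd$. The first step is to apply part~(b) of Proposition~\ref{pdet-properties} to the symmetric matrix $D$, which gives $\pdet(D)^2 = \pdet(D^2) = \pdet(\Delta) = \pdet(L + M)$. Thus everything reduces to evaluating $\pdet(L+M)$.

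The crux is the observation that $L$ and $M$ are \emph{mutually annihilating}. Indeed, $\bd^2 = 0$ forces $(\bd^\tr)^2 = (\bd^2)^\tr = 0$, whence $LM = \bd(\bd^\tr)^2\bd = 0$ and $ML = \bd^\tr\bd^2\bd^\tr = 0$. Part~(d) of Proposition~\ref{pdet-properties} then applies and yields $\pdet(\Delta) = \pdet(L)\pdet(M)$, while part~(c), taken with $A = \bd$ and $B = \bd^\tr$, shows $\pdet(L) = \pdet(\bd\bd^\tr) = \pdet(\bd^\tr\bd) = \pdet(M)$. Chaining these equalities produces $\pdet(D)^2 = \pdet(L)^2$.

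To conclude, I would note that since $\bd \in \ZZ^{n \times n}$, both $\pdet(D)$ and $\pdet(L)$ are integers, being coefficients of integral characteristic polynomials; hence the equality of their squares forces $\pdet(D) = \pm\pdet(L)$, which is the first asserted equality. The second equality is then immediate from the reformulated expansion~\eqref{reformulated-weighted-pdet-expansion} upon setting all weights $x_i = y_j = 1$, which rewrites $\pdet(\bd\bd^\tr)$ as the displayed sum of squares over $\RowB(\bd) \times \ColB(\bd)$.

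I do not expect a serious obstacle here, since all the needed machinery is already in place; the one genuinely unavoidable imprecision is the ambiguous sign. Passing from $\pdet(D)^2 = \pdet(L)^2$ to $\pdet(D)$ by a square root cannot by itself pin down $\sgn\pdet(D)$: the Laplacian $L = \bd\bd^\tr$ is positive semidefinite, so its nonzero eigenvalues, and hence $\pdet(L)$, are positive, whereas the symmetric Dirac operator $D$ may have negative eigenvalues. The sign of $\pdet(D)$ in fact encodes the parity of the number of negative eigenvalues of $D$, which the purely multiplicative arguments above do not detect, and this is exactly why the statement can only be phrased up to $\pm$.
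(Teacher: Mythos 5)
Your proposal is correct and follows essentially the same route as the paper's own proof: square both sides using Proposition~\ref{pdet-properties}(b) and the identity \eqref{Dirac-squared-is-L}, verify that $\bd\bd^\tr$ and $\bd^\tr\bd$ are mutually annihilating so that Proposition~\ref{pdet-properties}(d) factors the pseudodeterminant, apply Proposition~\ref{pdet-properties}(c) to identify the two factors, and obtain the second equality by specializing the weights in \eqref{reformulated-weighted-pdet-expansion} to~$1$. Your closing observation about why the sign cannot be pinned down (positive semidefiniteness of $\bd\bd^\tr$ versus possible indefiniteness of $\bd+\bd^\tr$) matches the remark the paper makes immediately after the corollary.
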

\begin{proof}
The second equality comes from specializing all variables to $1$ in the right-hand side of~\eqref{reformulated-weighted-pdet-expansion}.
For the first equality, we check that its two sides
have the same square:
\begin{align*}
\left( \pdet (\bd + \bd^\tr) \right)^2 
  &= \pdet\left( (\bd + \bd^\tr)^2 \right)
   && \text{(Prop.~\ref{pdet-properties}(b))}\\
  &= \pdet\left( \bd\bd^\tr + \bd^\tr \bd \right)
   && \text{(Eqn.~\eqref{Dirac-squared-is-L})}\\
  &= (\pdet \bd\bd^\tr) (\pdet \bd^\tr\bd)
   && \text{(Prop.~\ref{pdet-properties}(d))}\\
  &= \pdet(\bd\bd^\tr)^2.
   && \text{(Prop.~\ref{pdet-properties}(c))}
\end{align*}
For the third equality, note that 
$\bd\bd^\tr$ and
$\bd^\tr\bd$ are mutually annihilating:
$(\bd^\tr\bd)(\bd\bd^\tr)=\bd^\tr(\bd^2)\bd^\tr=0$
and
$(\bd\bd^\tr)(\bd^\tr\bd)=\bd(\bd^2)^\tr\bd=0$.
\end{proof}

\begin{remark}
All four operators 
$\bd\bd^\tr$,
$\bd^\tr\bd$,
$\bd\bd^\tr + \bd^\tr\bd$, and
$\bd+\bd^\tr$ are self-adjoint.  The first three are positive semidefinite, and 
hence have nonnegative pseudodeterminant by  Proposition~\ref{pdet-properties}(c).
However, the Dirac operator $\bd+\bd^\tr$ can be indefinite and have
negative pseudodeterminant. For example, 
$
\bd=\left[ \begin{matrix} 0 & 1 \\ 0 & 0 \end{matrix} \right]
$
has $\bd^2=0$, and its Dirac operator
$
\bd+\bd^\tr = \left[ \begin{matrix} 0 & 1 \\ 1 & 0 \end{matrix} \right] 
$
has eigenvalues $(+1,-1)$, with $\pdet(\bd+\bd^\tr)=-1$.
\end{remark}

\section{Symmetry or skew-symmetry} 
\label{symmetry-section}

Something interesting happens to the pseudodeterminant
in our previous results when $\bd$ happens to be square and either symmetric or
skew-symmetric, due to the following fact.

\begin{lemma} 
\label{det-switch-lemma}
For $R$ a domain, with $\bd$ in $R^{n \times m}$ of rank $r$,
and any $r$-subsets $A,A' \subseteq [n]$ and
$B,B' \subseteq [m]$,
$$
\det \bd_{A,B}\det \bd_{A',B'}=\det \bd_{A',B}\det \bd_{A,B'}.
$$
In particular, when $n=m$, for any $r$-subsets $I,J$ of $[n]$ one has
$$
\det \bd_{I,I}\det \bd_{J,J}=\det \bd_{I,J}\det \bd_{J,I}.
$$
\end{lemma}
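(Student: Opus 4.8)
The plan is to exploit the fact that, after extending scalars, a rank-$r$ matrix factors into a product whose maximal minors separate into a ``row part'' depending only on the chosen rows and a ``column part'' depending only on the chosen columns. Since both sides of the claimed identity are elements of $R$, and the determinant of a submatrix commutes with the inclusion $R \hookrightarrow K$ into the fraction field, it suffices to verify the identity over $K$. First I would fix a rank factorization $\bd = CD$ with $C \in K^{n \times r}$ and $D \in K^{r \times m}$, where the columns of $C$ form a $K$-basis for the column space of $\bd$ and $D$ records the coordinates of $\bd$ in that basis; both factors then have full rank $r$.

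The key step is to feed this factorization into the Binet-Cauchy identity already invoked in Section~\ref{char-poly-section}. For any $r$-subsets $A \subseteq [n]$ and $B \subseteq [m]$, Binet-Cauchy gives
\[
\det \bd_{A,B}
  = \sum_{\substack{S \subseteq [r]:\\ |S|=r}} \det C_{A,S}\,\det D_{S,B}
  = \det C_{A,[r]} \cdot \det D_{[r],B},
\]
since the only $r$-subset of $[r]$ is $S=[r]$ itself, so the sum collapses to a single term. Writing $c(A) := \det C_{A,[r]}$ and $d(B) := \det D_{[r],B}$, this says precisely that $\det \bd_{A,B} = c(A)\,d(B)$ factors as a function of $A$ alone times a function of $B$ alone.

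With this separation in hand the identity becomes immediate. Both products
\[
\det \bd_{A,B}\,\det \bd_{A',B'} = c(A)\,d(B)\,c(A')\,d(B')
\quad\text{and}\quad
\det \bd_{A',B}\,\det \bd_{A,B'} = c(A')\,d(B)\,c(A)\,d(B')
\]
equal the common quantity $c(A)\,c(A')\,d(B)\,d(B')$ by commutativity of $K$, proving the general statement; the special case $n=m$ then follows by taking $A=I$, $A'=J$, $B=I$, $B'=J$.

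I expect the only point deserving explicit care to be the existence of the full-rank factorization $\bd = CD$ over $K$ — standard, but worth stating so that the Binet-Cauchy collapse is justified — together with the remark that passing from $R$ to $K$ loses no information because the identity is polynomial in the entries of $\bd$. Everything after the factorization is a one-line consequence of Binet-Cauchy and commutativity, so the ``obstacle'' is really just organizing the separation-of-variables observation cleanly rather than any substantive difficulty.
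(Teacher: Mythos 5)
Your proof is correct, but it reaches the identity by a different route than the paper. Both arguments ultimately rest on the same fact — that the matrix of maximal minors $\left(\det \bd_{A,B}\right)$, indexed by $r$-subsets $A$ and $B$, has rank at most one, so that its $2\times 2$ minors vanish — but you establish this via a rank factorization $\bd = CD$ over $K$, which exhibits the explicit separation of variables $\det \bd_{A,B} = c(A)\,d(B)$ after the Binet--Cauchy sum collapses to its single term. The paper instead phrases everything in terms of exterior powers: it factors $\bd = PDQ$ with $P,Q$ invertible and $D$ the rank normal form, and uses functoriality of $\wedge^r$ to conclude that $\wedge^r\bd$ (whose matrix entries are precisely the minors $\det\bd_{A,B}$) has rank one. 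Your version is more elementary and self-contained, avoiding exterior algebra entirely, and it makes the outer-product structure of the compound matrix completely transparent; the paper's version buys the more general statement, proved along the way, that $\wedge^k \bd$ has rank $\binom{r}{k}$ for every $k$, which is of independent interest even though only the case $k=r$ is needed for the lemma. Your remarks on the reduction from $R$ to its fraction field $K$ (injectivity of $R\hookrightarrow K$ and the fact that both sides lie in $R$) and on the existence of the full-rank factorization are exactly the right points to flag, and both are standard, so there is no gap.
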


\begin{proof}
Extend scalars to the fraction field $K$ of $R$.
Considering $\bd$ as a $K$-linear map
$K^m \longrightarrow K^n$, its $r^{th}$ {\it exterior power} is
a $K$-linear map  
$$
\wedge^r K^m ~\xrightarrow{\wedge^r \bd}~ \wedge^r K^n.
$$
If $K^m, K^n$ have standard bases $(v_1,\ldots,v_m)$ and
$(w_1,\ldots,w_n)$, then $\wedge^r K^m,  \wedge^r K^n$ have
$K$-bases of wedges $v_A:=v_{a_1} \wedge \cdots \wedge v_{a_r}$
and $w_B:= w_{b_1} \wedge \cdots \wedge w_{b_r}$ indexed by $r$-subsets
$A \subseteq [m]$ and $B \subseteq [n]$.  The matrix for $\wedge^r \bd$
in these bases has $(A,B)$-entry $\det(\bd_{A,B})$.  To prove the lemma,
it suffices to show that 
$\wedge^r \bd$ has rank $1$,
so its $2 \times 2$ minors vanish.

Let us show more generally that $\wedge^k \bd$ has rank $\binom{r}{k}$.
Make changes of bases in $K^m, K^n$ via 
invertible matrices $P, Q$ in $GL_m(K), GL_n(K)$ ,
so that $\bd = P D Q$, where
$$
D=\left[
\begin{matrix}
I_r & 0 \\
0 & 0
\end{matrix} 
\right].
$$
Then 
$
\wedge^k \bd = \wedge^k PDQ = \wedge^k P \cdot \wedge^k D \cdot \wedge^k Q,
$
where $\wedge^k P, \wedge^k Q$ have inverses 
$\wedge^k(P^{-1}), \wedge^k(Q^{-1})$, and 
$$
\wedge^k D
=\left[
\begin{matrix}
I_{\binom{r}{k}} & 0 \\
0 & 0
\end{matrix} 
\right]
$$
clearly has rank $\binom{r}{k}$.  Hence this is also the rank of
$\wedge^k \bd$.
\end{proof}

Lemma~\ref{det-switch-lemma} has two interesting consequences for
matrices $\bd$ in $\ZZ^{n \times n}$ which are either symmetric or
skew-symmetric.  The first is the following observation about the pseudodeterminant
of such a matrix.

\begin{theorem}
\label{all-minors-same-sign}
If a matrix $\bd$ in $\ZZ^{n \times n}$ of rank $r$ has
$\bd^\tr=\pm \bd$, then all of its $r \times r$ 
principal minors $\{ \det(\bd_{I,I}) \}_{|I|=r}$
have the same sign, so that
\begin{align*}
\pdet(\bd) 
  ~&=~ \pm \sum_{I\in\RowB(\bd)} 
            |\coker(\bd_{I,I})|, \\
\pdet(X \bd) 
  ~&=~ \pm \sum_{I\in\RowB(\bd)} 
           \xx^I |\coker(\bd_{I,I})|.
\end{align*}
\end{theorem}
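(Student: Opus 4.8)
The plan is to derive the sign-coherence of the principal minors directly from Lemma~\ref{det-switch-lemma}, applied to the pair $(I,J)$, by using the (skew-)symmetry to rewrite the cross term $\det\bd_{J,I}$ in terms of $\det\bd_{I,J}$. Taking transposes of submatrices, one has $(\bd_{I,J})^\tr=(\bd^\tr)_{J,I}=\pm\,\bd_{J,I}$, so that $\det\bd_{J,I}=(\pm1)^r\det\bd_{I,J}$. Substituting into the identity of Lemma~\ref{det-switch-lemma} then gives
\[
\det\bd_{I,I}\,\det\bd_{J,J}=\det\bd_{I,J}\,\det\bd_{J,I}=(\pm1)^r\left(\det\bd_{I,J}\right)^2
\]
for every pair of $r$-subsets $I,J\subseteq[n]$. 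In the symmetric case the factor $(\pm1)^r$ is $+1$; in the skew-symmetric case I would invoke the classical fact that a skew-symmetric matrix over a field of characteristic zero has even rank, so that $r$ is even and again $(\pm1)^r=+1$. Either way, the product of any two principal $r\times r$ minors equals a perfect square, hence is nonnegative.

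From this the sign claim is immediate: two nonzero principal minors cannot have opposite signs, since their product is nonnegative, so all nonzero ones share a common sign $\epsilon\in\{+1,-1\}$. To match the nonvanishing minors with $\RowB(\bd)$ and to guarantee that at least one is nonzero, I would first observe that (skew-)symmetry forces the $i$th column of $\bd$ to equal $\pm$ its $i$th row, whence $I$ is a row basis if and only if it is a column basis; thus $\RowB(\bd)=\ColB(\bd)$. Proposition~\ref{row-basis-column-basis-prop} (with $J=I$) then shows $\det\bd_{I,I}\neq0$ precisely when $I\in\RowB(\bd)$, and since $\rank\bd=r$ forces $\RowB(\bd)\neq\varnothing$, at least one principal minor is nonzero, pinning down $\epsilon$.

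With sign-coherence established, I would finish by rewriting the pseudodeterminant via its defining expansion~\eqref{pdet-definition}: $\pdet(\bd)=\sum_{|I|=r}\det\bd_{I,I}=\sum_{I\in\RowB(\bd)}\det\bd_{I,I}$, where each $\bd_{I,I}$ is a nonsingular integer matrix with $|\det\bd_{I,I}|=|\coker(\bd_{I,I})|$ by Smith normal form. Writing $\det\bd_{I,I}=\epsilon\,|\coker(\bd_{I,I})|$ yields the first formula. For the weighted statement, since $X$ is diagonal and invertible over the fraction field $K(\xx)$, the matrix $X\bd$ still has rank $r$, and multiplicativity of the determinant gives $\det\big((X\bd)_{I,I}\big)=\det(X_{I,I})\det(\bd_{I,I})=\xx^I\det\bd_{I,I}$; summing over $|I|=r$ through~\eqref{pdet-definition} and substituting as before produces the second formula with the same $\epsilon$.

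The main obstacle is the bookkeeping of the factor $(\pm1)^r$ in the skew-symmetric case: for odd $r$ the product of principal minors would be a \emph{negative} of a square and the sign claim would collapse, so the argument genuinely depends on the evenness of the rank of a skew-symmetric matrix. For this reason I would state the even-rank fact explicitly (recalling its one-line proof via a block/Pfaffian decomposition) rather than let it pass silently; everything else—the transpose manipulation of submatrices, the identification $\RowB(\bd)=\ColB(\bd)$, and the passage from $|\det|$ to $|\coker|$—is routine.
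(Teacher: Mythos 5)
Your proof is correct and follows essentially the same route as the paper's: both combine Lemma~\ref{det-switch-lemma} with the hypothesis $\bd^\tr=\pm\bd$ and the even rank of skew-symmetric matrices to show that the product of any two principal $r\times r$ minors is a perfect square, forcing sign coherence, and then read off both formulas from the expansion~\eqref{pdet-definition}. The extra details you supply---the identification $\RowB(\bd)=\ColB(\bd)$ via Proposition~\ref{row-basis-column-basis-prop}, the Smith normal form justification of $|\det\bd_{I,I}|=|\coker(\bd_{I,I})|$, and the explicit computation for $X\bd$---are points the paper leaves implicit rather than a genuinely different argument.
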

\begin{proof}
Combining the assumption $\bd^\tr=\pm \bd$ with
Lemma~\ref{det-switch-lemma} yields
\begin{align}
\det(\bd_{I,I})\det(\bd_{J,J})
 &=\det(\bd_{I,J}) \det(\bd_{J,I}) \notag\\
 &= \det(\bd_{I,J}) \det(\pm \bd_{I,J} ) \notag\\
 &= (\pm 1)^r \det(\bd_{I,J})^2 \notag\\
 &= \det(\bd_{I,J})^2 \label{det-switch-with-perfect-square}
\end{align}
where the last equality  comes from the fact that whenever $\bd^\tr=-\bd$,
the rank $r$ of $\bd$ must be even;
see Lang \cite[\S XIV.9]{Lang}.  Thus  the product of any two nonzero $r \times r$ principal minors
$\det(\bd_{I,I})$ and $\det(\bd_{J,J})$ is a perfect square, and in particular is positive,
so that their signs agree.  Therefore, all the summands in~\eqref{pdet-definition}
(replacing $L$ with $\bd$) have the same sign, and since $\det \bd_{I,I}=|\coker\bd_{I,I}|$, we obtain the desired formulas.
\end{proof}

The second consequence of Lemma~\ref{det-switch-lemma} is one of our main linear algebra results on perfect squares.

\begin{theorem}
\label{perfect-square-theorem}
Let $\bd$ in $\ZZ^{n \times n}$  such that
$\bd^\tr=\pm \bd$.  Then 
\begin{equation}
\label{unweighted-perfect-square-equation}
\pdet(\bd \bd^\tr) = \pdet(\bd)^2.
\end{equation}
More generally, the matrix $L:=X^{\half} \bd Y \bd^\tr X^{\half}$ satisfies
$$
\pdet(L) = \pdet(X \bd) \pdet(Y \bd^\tr).
$$
\end{theorem}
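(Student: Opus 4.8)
The plan is to prove the weighted identity first and then recover \eqref{unweighted-perfect-square-equation} by specialization. Setting $x_i=y_j=1$ for all $i,j$ turns $L$ into $\bd\bd^\tr$, turns $X\bd$ into $\bd$ and $Y\bd^\tr$ into $\bd^\tr$; since Proposition~\ref{pdet-properties}(a) gives $\pdet(\bd^\tr)=\pdet(\bd)$, the weighted statement immediately yields $\pdet(\bd\bd^\tr)=\pdet(\bd)\pdet(\bd^\tr)=\pdet(\bd)^2$. So it suffices to show $\pdet(L)=\pdet(X\bd)\pdet(Y\bd^\tr)$, and my strategy is to expand all three pseudodeterminants as explicit polynomials in $\xx,\yy$ and match them.

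First I would record that $\RowB(\bd)=\ColB(\bd)$: a set $I$ is a row basis for $\bd$ exactly when it is a column basis for $\bd^\tr$, and since $\bd^\tr=\pm\bd$ this holds exactly when $I$ is a column basis for $\bd$. Write $\mathcal{B}$ for this common collection of $r$-subsets. Both $X\bd$ and $Y\bd^\tr$ have rank $r$, as $X,Y$ are invertible over the fraction field of $R[\xx,\yy]$, so the principal-minor form of the pseudodeterminant \eqref{pdet-definition} applies to each. Because $(X\bd)_{I,I}=X_I\,\bd_{I,I}$ with $\det X_I=\xx^I$, and likewise $\det(Y\bd^\tr)_{J,J}=\yy^J\det(\bd^\tr)_{J,J}=\yy^J\det\bd_{J,J}$, this gives
\[
\pdet(X\bd)=\sum_{|I|=r}\xx^I\det\bd_{I,I},
\qquad
\pdet(Y\bd^\tr)=\sum_{|J|=r}\yy^J\det\bd_{J,J},
\]
each of which is also exactly the content of Theorem~\ref{all-minors-same-sign} (applied to $\bd$, and to $\bd^\tr$, noting that the two matrices have identical principal minors).

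Multiplying the two expansions yields
\[
\pdet(X\bd)\,\pdet(Y\bd^\tr)=\sum_{|I|=|J|=r}\xx^I\yy^J\,\det\bd_{I,I}\det\bd_{J,J}.
\]
Here is the crux: the identity \eqref{det-switch-with-perfect-square}, obtained from Lemma~\ref{det-switch-lemma} together with the sign computation $(\pm1)^r=1$ valid under $\bd^\tr=\pm\bd$, rewrites each product $\det\bd_{I,I}\det\bd_{J,J}$ as the perfect square $(\det\bd_{I,J})^2$. Making this substitution, and noting that the terms with $I\notin\mathcal{B}$ or $J\notin\mathcal{B}$ vanish by Proposition~\ref{row-basis-column-basis-prop}, turns the right-hand side into $\sum_{I,J\in\mathcal{B}}\xx^I\yy^J(\det\bd_{I,J})^2$, which is precisely $\pdet(L)$ by the reformulated expansion \eqref{reformulated-weighted-pdet-expansion}.

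The point that needs the most care—and which I regard as the main obstacle—is the bookkeeping that lets a product of two single sums reorganize into the double sum \eqref{reformulated-weighted-pdet-expansion}: one must confirm that the summation ranges genuinely coincide (the equality $\RowB(\bd)=\ColB(\bd)$) and that the conversion $\det\bd_{I,I}\det\bd_{J,J}=(\det\bd_{I,J})^2$ is exactly the mechanism feeding Lemma~\ref{det-switch-lemma} into a perfect-square conclusion, this being the only place where symmetry or skew-symmetry is used. Everything else is the routine check that the pseudodeterminant of the non-symmetric matrices $X\bd$ and $Y\bd^\tr$ equals the sum of their $r\times r$ principal minors, which is immediate from \eqref{pdet-definition} once their rank is seen to be $r$.
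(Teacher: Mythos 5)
Your proof is correct and is essentially the paper's own argument run in reverse: the paper starts from the expansion \eqref{reformulated-weighted-pdet-expansion} of $\pdet(L)$, applies \eqref{det-switch-with-perfect-square} to replace $(\det\bd_{I,J})^2$ by $\det(\bd_{I,I})\det(\bd_{J,J})$, and factors the double sum into $\pdet(X\bd)\pdet(Y\bd^\tr)$, whereas you expand the two factors as sums of principal minors, multiply, and apply the same identity to reassemble $\pdet(L)$. The key ingredients---Lemma~\ref{det-switch-lemma} via \eqref{det-switch-with-perfect-square}, the expansion \eqref{reformulated-weighted-pdet-expansion}, and the identification of $\pdet(X\bd)$, $\pdet(Y\bd^\tr)$ with their principal-minor sums---are identical, so this is the same proof rather than a different route.
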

\begin{proof}
Combining equation~\eqref{det-switch-with-perfect-square}
with~\eqref{reformulated-weighted-pdet-expansion} gives
\begin{align*}
\pdet(L) 
  &= \sum_{  \substack{ I\in\RowB(\bd) \\ J\in\ColB(\bd) }}
         \xx^I \yy^J \det (\bd_{I,J})^2 \\
  &= \sum_{  \substack{ I\in\RowB(\bd) \\ J\in\ColB(\bd) }}
         \xx^I \yy^J \det(\bd_{I,I}) \det(\bd_{J,J}) \\
    &= \left( \sum_{I\in\RowB(\bd)} \xx^I \det(\bd_{I,I}) \right)
       \left( \sum_{J\in\ColB(\bd)} \yy^J \det(\bd_{J,J}) \right) 
       =\pdet(X \bd) \pdet(Y \bd^\tr). \qedhere
 \end{align*}
\end{proof}



\begin{remark}
When $r=n$, so that $\bd$ is nonsingular, we have $\pdet\bd=\det\bd$,
and Theorem~\ref{perfect-square-theorem} follows from the 
multiplicative property of $\det$, requiring no hypothesis that $\bd^\tr=\pm \bd$:
$$
\begin{aligned}
\det(L)&=\det(X^{\half} \bd Y \bd^\tr X^{\half})
= \det(X)^{\half} \det(\bd) \det(Y) \det(\bd^\tr) \det(X)^{\half} \\
&=\det(X) \det(\bd) \det(Y) \det(\bd^\tr) =\det(X \bd) \det( Y \bd^\tr).
\end{aligned}
$$
\end{remark}


\begin{remark}
\label{square-root-of-spectrum-remark}

It is easy to strengthen  \eqref{unweighted-perfect-square-equation} considerably to a statement comparing characteristic polynomials or eigenvalues.
Given any $\bd$ in $\RR^{n \times n}$  satisfying $\bd^\tr=\varepsilon \bd$ with
$\varepsilon$ in $\RR$, one has $\bd \bd^\tr=\varepsilon \bd^2$.  
Therefore if $\bd$ has nonzero eigenvalues 
$\lambda_1,\ldots,\lambda_r$,
 then $\bd \bd^\tr$ has nonzero eigenvalues 
 $\Lambda_1,\ldots,\Lambda_r$ where
 \begin{align}
  \Lambda_i &= \varepsilon \lambda_i^2,\notag\\
  \lambda_i &=\pm \sqrt{\frac{\Lambda_i}{\varepsilon}}.
\label{square-root-eigenvalues}
 \end{align}
Thus the spectrum of $\bd$ determines 
that of $\bd \bd^\tr$ uniquely, but  the spectrum of $\bd \bd^\tr$ does not in general
determine the $\pm$ signs in \eqref{square-root-eigenvalues}
without further information.
One such situation with further information appears in
Example~\ref{simplex-middle-boundary-charpoly-example} below.
Another occurs when $\bd^\tr=-\bd$ so that $\varepsilon=-1$, 
where the {\it spectral theorem} implies that $r$ is even, and that the
$\lambda_i$ are purely imaginary and occur in complex conjugate
pairs $\pm i \sqrt{\Lambda_i}$.
\end{remark}

\section{Topological motivation: a review of higher-dimensional trees}
\label{trees-section}

Our motivation is the enumeration of 
higher-dimensional spanning trees in cell complexes, as in the
groundbreaking papers of Kalai \cite{Kalai} and Bolker \cite{Bolker}, and with many further developments since; see, e.g. \cite{Adin, BajoBurdickChmutov, CCK, DKM-Simplicial,  DKM-Cellular,  DKM-Cutflow, KrushkalRenardy, Lyons, Maxwell}.   We review here 
trees in higher dimension, and explain 
a (known) further factorization for
the formula  \eqref{reformulated-weighted-pdet-expansion} in the topological setting,
even without any duality hypotheses.

We start by setting up notation for CW-complexes; see, e.g.,  \cite{FritschPiccinini,Hatcher, LundellWeingram, Munkres}. 
A finite CW-complex $S$ has {\it (augmented, integral) cellular chain complex}
\begin{equation}
\label{cellular-chain-complex}
\cdots \longrightarrow 
C_i(S,\ZZ) \xrightarrow{\bd_i}
C_{i-1}(S,\ZZ) \xrightarrow{\bd_{i-1}}
\cdots  \xrightarrow{\bd_1}
C_0(S,\ZZ) \xrightarrow{\bd_0}
C_{-1}(S,\ZZ)=\Zz \to 0.
\end{equation}
We will always be working with integer coefficients, so
we use the abbreviated notation $\HH_i(S)$ 
for the reduced homology $\HH_i(S,\ZZ):=\ker(\bd_i)/\im(\bd_{i+1})$,
with $i \geq -1$.

Let $f_j(S)$ denote the number of $j$-dimensional cells of $S$, and let $S^{(i)}$ denote the {\it $i$-skeleton} of $S$, that is, the subcomplex
of $S$ consisting of all cells having dimension at most $i$.

\begin{definition}
\label{tree-definition}
The CW-complex $S$ is \emph{$(i-1)$-acyclic} if $\HH_j(S)=0$ for $-1 \leq j \leq i-1$.  
For $S$ an $(i-1)$-acyclic CW-complex, a
subcomplex $T \subseteq S$ is an \emph{$i$-dimensional (spanning) tree}, or simply an \emph{$i$-tree}, if $S^{(i-1)}\subseteq T\subseteq S^{(i)}$
and $T$ satisfies the following three conditions
(of which any two imply the third):
\begin{itemize}
\item[(i)] $\HH_i(T)=0$;
\item[(ii)] $\HH_{i-1}(T)$ is finite;
\item[(iii)] the number of $i$-cells in $T$ equals the rank of $\bd_i$, namely
\begin{equation}
\label{rank-of-i-th-boundary}
\rank(\bd_i):=\sum_{j=-1}^{i-1} (-1)^{i-1-j} f_{j}(S).
\end{equation}
\end{itemize}
\end{definition}

These three conditions can alternatively be phrased as saying that
$T$ is $\QQ$-acyclic, that is, its (reduced) homology groups with $\QQ$-coefficients
all vanish.  Another equivalent phrasing is that the subcomplex $T$ 
is an $i$-tree for $S$ if and only if the
$i$-cells in $T$ index a subset of the columns of $\bd_i$ which give
a column-basis for~$\bd_i$ in the sense of 
Definition~\ref{row-column-basis-definition}.  When $S$ is a connected graph (i.e., a 1-dimensional complex that is 0-acyclic), the definition reduces to the usual graph-theoretic definition of a spanning tree.

As a consequence of Definition~\ref{tree-definition}, most groups $\HH_j(T)$ for an $i$-tree $T$ vanish:  
\begin{itemize}
\item $\HH_j(T)=0$ for $j > i$, because $T$ is $i$-dimensional, and 
\item $\HH_j(T)=\HH_j(S)=0$ for $j \leq i-2$, as $T$ and $S$ have the same $(i-1)$-skeleton.
\end{itemize}
The only potentially nonvanishing homology group for $T$ is the finite group $\HH_{i-1}(T)$.

\begin{definition}
For $i\geq 0$ and $S$ an $(i-1)$-acyclic CW-complex, the $i^{th}$ \emph{torsion tree enumerator} of~$S$ is
$$
\tau_i(S) :=\sum_{i\text{-trees }T\subseteq S} |\HH_{i-1}(T)|^2.
$$
More generally, letting $\xx=(x_1,\ldots,x_m)$ be a set of
variables indexing the $i$-cells of $S$, the \emph{$i^{th}$ weighted torsion tree enumerator} is
$$
\tau_i(S,\xx)  :=\sum_{i\text{-trees }T\subseteq S} \xx^T |\HH_{i-1}(T)|^2  
$$
where $\xx^T:=\prod_{j \in T} x_j$ is the product of variables corresponding to the $i$-cells contained in $T$.
\end{definition}

This enumerator for spanning trees arises in higher-dimensional generalizations of the Matrix-Tree Theorem, as we will explain.
Bajo, Burdick and Chmutov note \cite[Theorem 3.2]{BajoBurdickChmutov} that
$\tau_i(S)$ is the $x=y=0$ evaluation of a polynomial in $x,y$ that they call the {\it modified Tutte-Krushkal-Renardy} polynomial of the skeleton $S^{(i)}$, closely related to a polynomial 
introduced by Krushkal and Renardy in \cite{KrushkalRenardy}.

\begin{theorem}
\label{Adin-tree-pdet-factorization}
Let $i\geq0$ and let $S$ be an $(i-1)$-acyclic CW-complex with $f_{i-1}(S)=n$ and $f_i(S)=m$,
so that $\bd:=\bd_i\in\ZZ^{n \times m}$.
Let $\xx=(x_1,\ldots,x_n)$ and $\yy=(y_1,\ldots,y_m)$ be variables 
indexing the $(i-1)$-cells and $i$-cells of $S$, respectively, and let $X=\diag(\xx)$ and $Y=\diag(\yy)$.
Then
\begin{equation} \label{cellular-MTT}
\pdet(\bd \bd^\tr)= \tau_{i-1}(S) \cdot \tau_i(S).
\end{equation}
Moreover, if we let $L=X^{\half} \bd Y \bd^\tr X^{\half}$ be the weighted combinatorial Laplacian, then
\begin{equation} \label{doubly-weighted-tree-enum}
\pdet(L)= \xx^{[n]} \cdot \tau_{i-1}(S,\xx^{-1}) \cdot \tau_i(S,\yy).
\end{equation}
\end{theorem}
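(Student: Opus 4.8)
The plan is to start from the expansion \eqref{reformulated-weighted-pdet-expansion}, namely $\pdet(L)=\sum_{I\in\RowB(\bd),\,J\in\ColB(\bd)}\xx^I\yy^J(\det\bd_{I,J})^2$, and to show that each summand factors as a product of an $(i-1)$-tree contribution and an $i$-tree contribution, so that the whole double sum splits as a product of two generating functions. Two ingredients are needed. First, a dictionary between index sets: column bases $J\in\ColB(\bd)$ are exactly the $i$-cell sets of $i$-trees $T_J$ (recorded in the discussion after Definition~\ref{tree-definition}), and I must identify row bases $I\in\RowB(\bd)$ with $(i-1)$-trees. Second, a numerical dictionary expressing $(\det\bd_{I,J})^2$ through the torsion orders $|\HH_{i-1}(T_J)|$ and $|\HH_{i-2}(T'_{I})|$.

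For the index dictionary, I would argue that $I\subseteq[n]$ is a row basis of $\bd=\bd_i$ if and only if its complement $[n]\sm I$ is a column basis of $\bd_{i-1}$, i.e.\ the $(i-1)$-cell set of an $(i-1)$-tree $T'$. Writing $W:=\ker\bd_{i-1}$, the hypothesis that $S$ is $(i-1)$-acyclic gives $W\otimes K=\im\bd_i\otimes K$ over the fraction field $K$, so $\rank\bd_i+\rank\bd_{i-1}=n$ and the two candidate index sets have the same size. The point is then purely linear-algebraic: $I$ is a row basis of $\bd_i$ iff $(\ker\bd_i^\tr)\cap K^I=0$, while $[n]\sm I$ is a column basis of $\bd_{i-1}$ iff $W\cap K^{[n]\sm I}=0$; since $\ker\bd_i^\tr=(\im\bd_i)^\perp=W^\perp$ and $K^I=(K^{[n]\sm I})^\perp$ for the standard form, these two conditions are interchanged by taking orthogonal complements and hence are equivalent.

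The numerical dictionary is the technical heart, and the step I expect to be the main obstacle. For an $i$-tree $T_J$ one has $|\HH_{i-1}(T_J)|=[\,W:\im(\bd_i)_{[n],J}\,]$, since $T_J$ has the full $(i-1)$-skeleton. Projecting onto the coordinates indexed by a row basis $I$, the map $p_I\colon\ZZ^n\to\ZZ^I$ is injective on $W$ (that is exactly the row-basis condition above), so $|\det(\bd_i)_{I,J}|=[\,\ZZ^I:p_I(\im(\bd_i)_{[n],J})\,]=[\,\ZZ^I:p_I(W)\,]\cdot|\HH_{i-1}(T_J)|$. It remains to identify the constant $c_I:=[\ZZ^I:p_I(W)]$ with the lower torsion $|\HH_{i-2}(T'_{[n]\sm I})|$. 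For this I would introduce the injective map $\Phi:=(p_I,\bd_{i-1})\colon\ZZ^n\to\ZZ^I\oplus\im\bd_{i-1}$ and compute $|\coker\Phi|$ in two ways: a block-triangular reduction (using that $\bd_{i-1}$ is injective on the coordinates of $[n]\sm I$) gives $|\coker\Phi|=[\im\bd_{i-1}:\bd_{i-1}(\ZZ^{[n]\sm I})]=|\HH_{i-2}(T'_{[n]\sm I})|$, where $\ker\bd_{i-2}=\im\bd_{i-1}$ over $\ZZ$ by $(i-1)$-acyclicity; while a diagram chase identifying $\coker\Phi\isom\ZZ^I/p_I(W)$ gives $|\coker\Phi|=c_I$. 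Hence $|\det(\bd_i)_{I,J}|=|\HH_{i-2}(T'_{[n]\sm I})|\cdot|\HH_{i-1}(T_J)|$.

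Finally I would assemble the pieces. Substituting $(\det\bd_{I,J})^2=|\HH_{i-2}(T'_{[n]\sm I})|^2\,|\HH_{i-1}(T_J)|^2$ into \eqref{reformulated-weighted-pdet-expansion} makes the double sum factor as
\[
\pdet(L)=\Bigl(\sum_{I\in\RowB(\bd)}\xx^I|\HH_{i-2}(T'_{[n]\sm I})|^2\Bigr)\Bigl(\sum_{J\in\ColB(\bd)}\yy^J|\HH_{i-1}(T_J)|^2\Bigr).
\]
The second factor is $\tau_i(S,\yy)$ by definition. In the first, reindexing by the $(i-1)$-tree $T'$ with $(i-1)$-cell set $I'=[n]\sm I$ and using $\xx^I=\xx^{[n]}(\xx^{I'})^{-1}$ turns it into $\xx^{[n]}\tau_{i-1}(S,\xx^{-1})$, which yields \eqref{doubly-weighted-tree-enum}. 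Specializing every $x_i$ and $y_j$ to $1$ (so that $L=\bd\bd^\tr$ and $\xx^{[n]}=1$) then gives \eqref{cellular-MTT}. I would close by remarking that the boundary case $i=0$ is handled by the convention $\tau_{-1}(S)=1$.
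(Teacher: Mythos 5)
Your proposal is correct, and its skeleton is the same as the paper's: rewrite \eqref{reformulated-weighted-pdet-expansion} as a sum over pairs consisting of an $i$-tree $T$ and an $(i-1)$-tree $T'$, prove $|\det\bd_{I,J}|=|\HH_{i-1}(T)|\cdot|\HH_{i-2}(T')|$, and let the double sum factor. Where you genuinely diverge is in how you establish the two key ingredients, which the paper (whose proof is only a sketch) handles topologically. For the dictionary between row bases of $\bd_i$ and complements of $(i-1)$-trees, the paper cites Gale duality; you give a self-contained linear-algebra proof via $\ker\bd_i^\tr=(\im\bd_i)^\perp=W^\perp$ together with the rank count $\rank\bd_i+\rank\bd_{i-1}=n$ (your phrase ``interchanged by taking orthogonal complements'' silently uses that count to pass from $K^{[n]\sm I}+W=K^n$ to $K^{[n]\sm I}\cap W=0$, but since you established equality of sizes beforehand, this is sound). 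For the torsion factorization, the paper identifies $\ZZ^I/\bd_{I,J}\ZZ^J$ with the relative homology group $\HH_{i-1}(T,T')$ and splits it by the long exact sequence of the pair $T'\subset T$, using $\HH_{i-1}(T')=0$ and $\HH_{i-2}(T)=\HH_{i-2}(S)=0$; your two computations of $|\coker\Phi|$ for the injection $\Phi=(p_I,\bd_{i-1})$ are an elementary stand-in for exactly that exact sequence, trading homological machinery for lattice-index bookkeeping. What your route buys is self-containedness: no relative homology, no long exact sequence, and every step checkable by hand over $\ZZ$. What the paper's route buys is brevity and the conceptually meaningful intermediate quantity $|\HH_{i-1}(T,T')|$ from \eqref{pair-sequence-consequence}, which it reuses later (in the remark following Corollary~\ref{perfect-square-corollary}, where $|\HH_{k-1}(T,T^\vee)|$ reinterprets the square root of the tree count). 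Your closing remark on the convention $\tau_{-1}(S)=1$ for $i=0$ also addresses a boundary case that the paper leaves implicit.
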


Theorem~\ref{Adin-tree-pdet-factorization} originates in the work of Adin~\cite[Thm.~3.4]{Adin} on a special class of simplicial complexes.  It was subsequently generalized by various authors, e.g., \cite{CCK, DKM-Simplicial,  DKM-Cellular,  Lyons, Maxwell, Petersson}.
The unweighted formula~\eqref{cellular-MTT} appears in many of these sources, as do
several formulas for weighted enumeration of $i$-trees.
To our knowledge, no explicit equivalent of the formula \eqref{doubly-weighted-tree-enum}
for \emph{simultaneous} weighted enumeration of $i$- and $(i-1)$-trees
has previously appeared in the literature.  On the other hand, the proof runs along the same lines laid down by Adin and subsequently
explained in detail in many other sources, so we only sketch it here.  The first key observation is that
a set $J$ of columns of $\bd$ is a column basis if and only if the corresponding $i$-faces are the facets of an $i$-tree $T$,
and a set $I$ of rows is a row basis if and only if the corresponding $(i-1)$-faces form the \emph{complement} of the facets of an $(i-1)$-tree $T'$.
(The latter assertion is an instance of Gale duality; see \cite[\S 2.2]{Oxley}, \cite[\S 8.1]{OMBook}.)
Thus equation~\eqref{reformulated-weighted-pdet-expansion} can be rewritten as a sum over pairs $(T,T')$ of $i$- and $(i-1)$-trees.  The second key point is that
\begin{equation}
\label{pair-sequence-consequence}
|\det \bd_{I,J}| ~=~ |\HH_{i-1}(T,T')| ~=~ | \HH_{i-1}(T) | \cdot| \HH_{i-2}(T') |,
\end{equation}
where the second equality can be deduced from the homology long exact sequence for the pair $T'\subset T$.  The same argument goes through
upon replacing $\bd$ with its doubly weighted analogue $X^{\half}\bd Y^{\half}$.

When $S$ is a connected graph ($i=1$), a $0$-spanning tree is simply a vertex.  Thus $\tau_0(S)$ is the number
of vertices, and the unweighted formula is one form of the classical matrix-tree theorem.

\section{Self-dual CW-complexes} 
\label{duality-section}

We next consider stronger assumptions on our CW-complex, some technical, and
some concerning symmetry.

\begin{definition} \label{self-dual-ball}
A {\it self-dual $d$-ball}  is  a pair $(S,\alpha)$ where $S$ is a CW complex, and 
$\alpha$ is a self-map of the face poset $P$ of $S$ defined by the inclusion order on the cells, such that
\begin{itemize}
\item $S$ is a {\it regular} CW-complex (i.e., its attaching maps are homeomorphisms; see \cite{Bjorner,OMBook,FritschPiccinini,LundellWeingram}),
\item $S$ is homeomorphic to a $d$-dimensional ball, and
\item $\alpha$ is anti-automorphism; i.e $\sigma\overset{\alpha}{\longmapsto}\tilde\sigma$ satisfies
$\sigma\subseteq\tau$ if and only if $\tilde\sigma\supseteq\tilde\tau$.
\end{itemize}
\end{definition}

\noindent
Note that since the empty cell $\varnothing$ of
dimension $-1$ is a bottom element in $P$, there will be a unique top
element $\tilde{\varnothing}$ in $P$, which must index the unique $d$-cell of $S$,
that is, the interior of the $d$-ball.   For this reason, a self-dual $d$-ball $(S,\alpha)$ 
is uniquely determined by its boundary $(d-1)$-sphere $\Bd S$ together with the restriction of
$\alpha$ to the {\it proper part} of $P$, indexing non-empty proper cells.

\begin{example}
\label{self-dual-polytopes-example}
{\it Self-dual polytopes}.
A $d$-dimensional convex polytope $\PPP$ gives rise to a regular CW $d$-ball,
namely the cell complex of its faces.
If $\PPP$ is embedded in $\RR^d$
with the origin in its interior, then its {\it polar dual} polytope 
(see Ziegler \cite[Lec.~2]{Ziegler}) is 
$$
\PPP^\diamond:=\{ \yy \in \RR^d: 
             \xx \cdot \yy \leq 1 \text{ for all }\xx \in \PPP \}.
$$
The face poset of $\PPP^\diamond$ is the opposite $P^{\op}$ of the
face poset of $P$; see \cite[Cor. 2.14]{Ziegler}.  

A polytope $\PPP$ is called \emph{self-dual} if there is a poset isomorphism $P \rightarrow P^{\op}$.  In this case, the face complex is a self-dual $d$-ball as in Definition~\ref{self-dual-ball}.  Some families and examples:
\begin{itemize}
\item[(a)] An $m$-sided polygon in $\RR^2$. 
\item[(b)] The pyramid over an $m$-sided polygon in $\RR^3$.
\item[(c)] {\it Elongated} and {\it multiply elongated} pyramids over an $m$-sided polygon in $\RR^3$.
\item[(d)] The {\it diminished trapezohedron}\footnote{The case $m=6$ is depicted via its {\it Schlegel diagram} \cite[\S 5.2]{Ziegler} in Example~\ref{self-dual-plane-graphs-example} below.} over an $m$-sided polygon in $\RR^3$.
\item[(e)] The self-dual {\it regular polyhedron} in $\RR^4$, called the {\it 24-cell}.
\item[(f)] Generalizing (b), any pyramid in $\RR^{d+1}$ over a self-dual polytope in $\RR^d$;
see also \cite[Example 3.2]{Maxwell}.
\item[(g)] Specializing (f), a simplex with $n$ vertices in $\RR^{n-1}$ is an iterated pyramid over a $1$-polytope.
\end{itemize}
Families (a), (b), (c) from the above list are illustrated below, with $m=5$:
$$
\begin{array}{ccc}
\xymatrix{
  &  &\bullet\ar@{-}[dll]\ar@{-}[drr]&  &\\
\bullet \ar@{-}[dr]& & & &\bullet\ar@{-}[dl] \\
&\bullet\ar@{-}[rr]& &\bullet &\\
}
&
\xymatrix{
  &  &\bullet\ar@{-}[ddll]\ar@{-}[dddl]\ar@{--}[d]\ar@{-}[dddr]\ar@{-}[ddrr]&  &\\
  &  &\bullet\ar@{--}[dll]\ar@{--}[drr]&  &\\
\bullet \ar@{-}[dr]& & & &\bullet\ar@{-}[dl] \\
&\bullet\ar@{-}[rr]& &\bullet &\\
}
&
\xymatrix@R=5pt{
  &  &\bullet\ar@{-}[ddll]\ar@{-}[dddl]\ar@{--}[d]\ar@{-}[dddr]\ar@{-}[ddrr]&  &\\
  &  &\bullet\ar@{--}[dll]\ar@{--}[ddd]\ar@{--}[drr]&  &\\
\bullet \ar@{-}[dr]\ar@{-}[ddd]& & & &\bullet\ar@{-}[dl]\ar@{-}[ddd] \\
&\bullet\ar@{-}[rr]\ar@{-}[ddd]& &\bullet \ar@{-}[ddd]&\\
  &  &\bullet\ar@{--}[dll]\ar@{--}[drr]&  &\\
\bullet \ar@{-}[dr]& & & &\bullet\ar@{-}[dl] \\
&\bullet\ar@{-}[rr]& &\bullet &\\
}\\
\text{(a)} & \text{(b)} & \text{(c)} 
\end{array}
$$
\end{example}

\begin{example}
\label{self-dual-plane-graphs-example}
{\it Self-dual plane graphs.}
A {\it plane graph} is a (finite) graph $G=(V,E)$ with vertices $V$ and
edges $E$, together with a choice of an
embedding in the plane $\RR^2$ that has no edges crossing in their interiors.
Removing the embedded graph from $\RR^2$ results in several connected components, called {\it regions} or {\it faces}.  These faces are the vertex set $V^*$ for 
the {\it plane dual} graph $G^*=(V^*,E^*)$, having an edge $e^*\in E^*$
for every edge $e\in E$, where the two endpoints for $e^*$ correspond to the (possibly identical)
regions on either side of the edge~$e$.  One can always embed $G^*$ in the plane
with a vertex inside each region of $G$, and with the edge~$e^*$ crossing~$e$ transversely.
One can consider both $G, G^*$ as embedded on the $2$-sphere which is the one-point compactification of $\RR^2$, and if there is a self-homeomorphism of this $2$-sphere that sends~$G$ to~$G^*$, we will say
that $G$ is a {\it self-dual plane graph}.

Any plane graph gives rise in this way to a $CW$ $2$-sphere,
but not all of them are {\it regular} $CW$; one must first impose
some vertex-connectivity on $G$.  

\begin{definition}
A graph $G=(V,E)$ is {\it $k$-vertex-connected} if $|V| \geq k+1$ and deleting any subset of vertices
$V' \subset V$ with $0 \leq |V'| \leq k-1$ leaves a connected graph.
\end{definition}

\noindent
In the following proposition, assertion (i) is not hard to check,
and (ii) is a result of Steinitz  \cite[Lecture 4]{Ziegler}.

\begin{proposition}
Consider the $CW$ $2$-sphere $S$ associated to a plane graph $G$.
\begin{itemize}
\item[(i)] $S$ is regular $CW$ if only if $G$ 
has no self-loops,
has at least two edges, and is $2$-vertex-connected.
\item[(ii)]
$S$ is cellularly homemorphic to the boundary of a $3$-dimensional polytope 
if and only if $G$ 
has no self-loops, no parallel edges, and is $3$-vertex-connected.
\end{itemize}
\end{proposition}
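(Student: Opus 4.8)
The plan is to verify the regularity of $S$ one cell-dimension at a time, matching each hypothesis on $G$ to the condition that the cells of the corresponding dimension attach by homeomorphisms. Recall that a finite CW-complex is regular exactly when the characteristic map of each cell restricts to a homeomorphism of $\partial D^k$ onto its image; equivalently, each attaching map is a homeomorphism onto a subcomplex. The cells of $S$ are the vertices of $G$ (the $0$-cells), the edges of $G$ (the $1$-cells), and the regions cut out by $G$ on $\Ss^2$ (the $2$-cells). Since $0$-cells are always regular, only the $1$- and $2$-cells require attention.

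For part~(i), I would first take a $1$-cell, an edge $e$: its attaching map carries the two points of $\partial D^1 = \Ss^0$ to the two endpoints of $e$, and this is a homeomorphism onto a subcomplex precisely when those endpoints are distinct, i.e.\ when $e$ is not a self-loop. Thus the $1$-skeleton is regular if and only if $G$ has no self-loops. Next I would take a $2$-cell, a region $F$: its attaching map carries $\partial D^2 = \Ss^1$ to the boundary walk of $F$, and this is a homeomorphism onto a subcomplex precisely when that walk is a simple closed curve, i.e.\ a cycle of $G$ traversing each of its vertices and edges once. I would then isolate the two ways simplicity can fail: the walk repeats an edge exactly when that edge has $F$ on both of its sides, which happens at a bridge of $G$ (with a single edge as the extreme case, ruled out by the ``at least two edges'' hypothesis), and it repeats a vertex without repeating an edge exactly at a cut vertex of $G$. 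Invoking the standard fact that, for a connected plane graph with at least two edges and no self-loops, every face boundary is a simple cycle if and only if $G$ has no cut vertex (i.e.\ is $2$-vertex-connected), I would conclude that all $2$-cells are regular precisely under the stated connectivity and edge-count hypotheses. Combining the two cell-dimensions yields the equivalence in~(i).

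For part~(ii), I would reduce to Steinitz's theorem. The boundary of a $3$-dimensional convex polytope is a regular CW $2$-sphere whose $1$-skeleton is the graph of vertices and edges of the polytope; hence if $S$ is cellularly homeomorphic to such a boundary, then $G$ is the graph of a $3$-polytope. Steinitz's theorem \cite[Lecture~4]{Ziegler} characterizes the graphs of $3$-polytopes as exactly the simple, planar, $3$-vertex-connected graphs. Conversely, if $G$ is simple, planar, and $3$-vertex-connected, Steinitz's theorem produces a $3$-polytope $P$ with graph $G$, and Whitney's theorem on the uniqueness of embeddings of $3$-connected planar graphs ensures that the spherical embedding defining $S$ agrees with the one coming from $\partial P$, so that $S$ is cellularly homeomorphic to $\partial P$. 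Since $G$ is planar by hypothesis, the requirements reduce to $G$ being simple (no self-loops, no parallel edges) and $3$-vertex-connected, which is the assertion.

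The genuinely deep input is the converse half of Steinitz's theorem---producing a polytope from a graph---which I would quote rather than reprove; by contrast, everything in~(i) is routine plane topology. The one point in~(ii) that deserves care is the passage from an abstract isomorphism of graphs to a \emph{cellular} homeomorphism of spheres, which is exactly what Whitney's uniqueness theorem supplies by forcing the region structure of $G$ to agree with the face lattice of $P$. I expect the only mildly fussy part of~(i) to be the bookkeeping of the small degenerate cases, namely pinning down precisely where the ``at least two edges'' hypothesis, as opposed to $2$-vertex-connectivity, is what rules out a non-regular attaching map.
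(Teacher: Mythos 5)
Your proof is correct and follows essentially the paper's own route: the paper gives no details at all, merely asserting that (i) is ``not hard to check'' and citing Steinitz (Ziegler, Lecture 4) for (ii), which is exactly the reduction you carry out, with your cell-by-cell verification of regularity being the intended routine check for (i). Your one genuine addition---invoking Whitney's uniqueness theorem for embeddings of $3$-connected planar graphs to upgrade the abstract graph isomorphism supplied by Steinitz's theorem to a \emph{cellular} homeomorphism of spheres---is precisely the detail that the paper's bare citation leaves implicit, and it is the right way to close that gap.
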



\noindent
For example, the $1$-skeleton of the 
$3$-dimensional self-dual polytope 
appearing in Example~\ref{self-dual-polytopes-example}(d) above with $m=6$,
the {\it diminished trapezohedron over a hexagon}, is shown here as
a self-dual plane graph:
$$
\xymatrix@R=15pt@C=10pt{
 & & &\bullet\ar@{-}[dlll]\ar@{-}[dl]\ar@{-}[dr]\ar@{-}[drrr]& & & \\
\bullet\ar@{-}[dd]\ar@{-}[dr]\ar@{-}[rr]& &\bullet& &\bullet& &\bullet\ar@{-}[dd]\ar@{-}[dl]\ar@{-}[ll]\\
 &\bullet& &\bullet\ar@{-}[rr]\ar@{-}[ll]\ar@{-}[dr]\ar@{-}[dl]\ar@{-}[ur]\ar@{-}[ul]& &\bullet& \\
\bullet\ar@{-}[rr]\ar@{-}[ur]& &\bullet& &\bullet& &\bullet\ar@{-}[ll]\ar@{-}[ul]\\
 & & &\bullet\ar@{-}[ulll]\ar@{-}[ul]\ar@{-}[ur]\ar@{-}[urrr]& & & \\
}
$$

\end{example}

There are two reasons why we have restricted attention to self-dual $d$-balls.  
First, the topology of any regular CW~complex~$X$ is determined combinatorially by its face poset $P$, as we now explain.  Consider the
\emph{order complex} $\Delta(P)$, the 
abstract simplicial complex
whose vertices are the elements of $P$ (i.e., the cells of $X$), and whose simplices are the subsets 
$\{\sigma_1 < \ldots < \sigma_\ell\}$ that are totally ordered
in $P$.  In terms of cells, this means that
$\sigma_i\subseteq\Bd\sigma_j$ for $i < j$
(where $\Bd$ means topological boundary).  Meanwhile, the
regular CW~complex~$X$
has a triangulation, its 
\emph{barycentric subdivision} $\Sd X$, which is
isomorphic as a simplicial complex to $\Delta(P)$;
that is, the geometric realization $|\Delta(P)|$ is homeomorphic to $X$
\cite[Theorem~1.7]{LundellWeingram},
\cite[Theorem~3.4.1]{FritschPiccinini}, \cite{Bjorner}.  The {\it barycenter} $b_\sigma$ 
of the cell $\sigma$ is the point of $|\Delta(P)|$ (or the vertex in $\Sd X$)
that corresponds to the vertex of $\Delta(P)$ indexed by $\sigma$.  Any poset anti-automorphism $\alpha$ of $P$ maps chains to chains, hence induces a simplicial automorphism, which we will denote by $\hat\alpha$, on the face poset of $\Sd X$ ($\isom\Delta(P)$).

The second point about self-dual $d$-balls comes from considering
subcomplexes, corresponding to {\it order ideals} of the poset $P$.
The anti-automorphism $\alpha$ of $P$ 
lets one convert {\it co-complexes}, that is, complements of subcomplexes, or 
{\it order filters} in the face poset $P$, 
back into complexes or order ideals.

\begin{definition} \label{define-Alexander-dual}
Let $(S,\alpha)$ be a self-dual $d$-ball and $T\subseteq S$ a subcomplex.
The {\it Alexander dual} (or \emph{blocker}) of $T$ in $S$ is
$$
T^\vee:= \{ \text{cells }\sigma\text{ of }S: \alpha(\sigma) \not\in T\}.
$$
\end{definition}

\begin{remark}
It follows from the definition that $(T^\vee)^\vee=\alpha^{-2}(T)$.  
The poset anti-automorphism $\alpha$ need not be an involution, so it is
not necessarily the case that $(T^\vee)^\vee=T$.  On the other hand,
$\alpha^2$ is a poset {\it automorphism} of $P$.  Hence $T$ and $(T^\vee)^\vee$ are
regular cell complexes with isomorphic face posets, and are therefore homeomorphic.
\end{remark}


The next result shows that half the torsion tree enumerators $\tau_i(S,\xx)$ 
in a self-dual $d$-ball $S$ determine the rest.  It was observed
by Kalai in \cite[\S 6]{Kalai} for the case that $S$ is a simplex, as in Example~\ref{self-dual-polytopes-example}(g).

\begin{proposition}
\label{Alexander-dual-higher-dimensional-trees}
Let $(S,\alpha)$ be a self-dual $d$-ball, and let $i,j\geq 0$ with $i+j=d-1$.
Then a subcomplex $T \subset S$ is an $i$-tree in $S$ if and only if $T^\vee$ is a $j$-tree.  Furthermore, when these conditions hold, one has
$$
\HH_{i-1}(T) \cong \HH_{j-1}(T^\vee).
$$
In particular, $\tau_i(S) = \tau_j(S)$.  More generally, label the $i$-cells and $j$-cells
with the variables $\xx=(x_1,\ldots,x_n)$ so that each cell
$\sigma$ and its opposite cell $\tilde\sigma$ receive the same label.
Then 
\begin{equation} \label{Alex-dual-weighted}
\tau_i(S,\xx) = \xx^{[n]} \tau_j(S,\xx^{-1})
\end{equation}
where $\tau_j(S,\xx^{-1})$ denotes the rational function obtained from $\tau_j(S,\xx)$ by replacing each $x_i$ with $x_i^{-1}$.
\end{proposition}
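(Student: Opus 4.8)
The plan is to deduce the entire proposition from a single \emph{combinatorial Alexander duality} isomorphism. Since $i,j\ge 0$ with $i+j=d-1$, every subcomplex $T$ with $S^{(i-1)}\subseteq T\subseteq S^{(i)}$ lies inside the boundary sphere $\Bd S=S^{(d-1)}\isom\Ss^{d-1}$, and I would first establish that for each such $T$,
\begin{equation}
\label{comb-alexander}
\HH_k(T)\isom\HH^{d-2-k}(T^\vee)\qquad\text{for all }k,
\end{equation}
where the right-hand side is reduced \emph{co}homology; for the simplex this is Kalai's observation \cite{Kalai}. The correspondence of cells deserves to be recorded first: $\alpha$ sends $k$-cells to $(d-1-k)$-cells, so directly from Definition~\ref{define-Alexander-dual} the dual $T^\vee$ contains every cell of dimension $\le j-1$, while its $j$-cells are exactly those $\sigma$ with $\alpha(\sigma)$ an $i$-cell \emph{not} lying in $T$; thus $S^{(j-1)}\subseteq T^\vee\subseteq S^{(j)}$ automatically.

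Granting \eqref{comb-alexander}, the tree characterization and the homology isomorphism are formal. Rewriting \eqref{comb-alexander} as $\HH^c(T^\vee)\isom\HH_{d-2-c}(T)$ and assuming $T$ is an $i$-tree (so $\HH_k(T)=0$ for $k\ne i-1$ and $\HH_{i-1}(T)$ is finite), the right-hand side vanishes unless $d-2-c=i-1$, i.e.\ $c=j$, and $\HH^j(T^\vee)\isom\HH_{i-1}(T)$ is finite. Because $T^\vee\subseteq S^{(j)}$ has no cells above dimension $j$, the group $\HH_j(T^\vee)=\ker\bd_j$ is free; together with finiteness of $\HH^j(T^\vee)$, universal coefficients force $\HH_j(T^\vee)=0$. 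Likewise $\HH^{j-1}(T^\vee)\isom\HH_i(T)=0$ yields $\Hom(\HH_{j-1}(T^\vee),\ZZ)=0$, so $\HH_{j-1}(T^\vee)$ is finite. By the ``any two imply the third'' clause of Definition~\ref{tree-definition}, $T^\vee$ is a $j$-tree, and $\HH_{j-1}(T^\vee)\isom\Ext(\HH_{j-1}(T^\vee),\ZZ)\isom\HH^j(T^\vee)\isom\HH_{i-1}(T)$. The converse implication follows by applying this to the subcomplex $T^\vee$ with $i,j$ interchanged, using $(T^\vee)^\vee=\alpha^{-2}(T)\isom T$.

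For the enumerators I would track the weights under this correspondence. Labeling so that each $i$-cell $\sigma$ and the $j$-cell $\alpha(\sigma)$ share a variable, the $i$-cells of $T$ are indexed by some $A\subseteq[n]$ and the $j$-cells of $T^\vee$ by the complement $[n]\sm A$, whence $\xx^T=\xx^A$ and $\xx^{T^\vee}=\xx^{[n]}/\xx^A$. Therefore each summand transforms as $\xx^T|\HH_{i-1}(T)|^2=\xx^{[n]}\,(\xx^{-1})^{T^\vee}|\HH_{j-1}(T^\vee)|^2$. Since $T\mapsto T^\vee$ is a bijection from $i$-trees to $j$-trees (injective because $(T^\vee)^\vee=\alpha^{-2}T$ with $\alpha^2$ a poset automorphism, and surjective by the symmetric construction using $\alpha^{-1}$), summing over all $i$-trees gives $\tau_i(S,\xx)=\xx^{[n]}\tau_j(S,\xx^{-1})$; setting every $x_\ell=1$ recovers $\tau_i(S)=\tau_j(S)$.

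The crux, and the step I expect to be the main obstacle, is \eqref{comb-alexander}, which amounts to identifying $\Ss^{d-1}\sm|T|$ up to homotopy with $|T^\vee|$. Topological Alexander duality in $\Ss^{d-1}$ already gives $\HH_k(T)\isom\HH^{d-2-k}(\Ss^{d-1}\sm|T|)$, so what remains is the geometric identification, and this is exactly where regularity of $S$ and the self-duality $\alpha$ enter. Passing to the barycentric subdivision $\Sd S\isom\Delta(P)$ and the induced simplicial automorphism $\hat\alpha$, I would use the \emph{dual block decomposition}: the closed dual block of a cell $\sigma$ is the subcomplex of $\Sd S$ spanned by barycenters $b_{\sigma_0},\dots,b_{\sigma_p}$ of chains $\sigma\le\sigma_0<\cdots<\sigma_p$. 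The complement $\Ss^{d-1}\sm|T|$ deformation retracts onto the union of the dual blocks of the cells \emph{not} in $T$, and $\hat\alpha$ carries this union onto $\Sd(T^\vee)$, giving $\Ss^{d-1}\sm|T|\simeq|T^\vee|$. Making this retraction precise, checking its compatibility with the boundary sphere, and accounting for the fact that $\alpha$ need not be an involution (so that $\alpha^{-1}$ must be tracked alongside $\alpha$) are the delicate points; here I would follow the ideas Kalai developed for the simplex \cite[\S 6]{Kalai}.
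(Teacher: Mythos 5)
Your proposal is correct and takes essentially the same route as the paper: pass to the barycentric subdivision, identify $\Sd T^\vee$ via $\hat\alpha$ with the union of dual blocks of the cells outside $T$, deformation-retract $\Bd S \sm |T|$ onto it, and then apply Alexander duality together with universal coefficients to get both the tree characterization and $\HH_{i-1}(T)\isom\HH_{j-1}(T^\vee)$, with the same bijective bookkeeping for the weighted count. The only cosmetic difference is that you run \emph{integral} Alexander duality through universal coefficients for both conclusions, whereas the paper first uses $\QQ$-coefficient duality to get the tree equivalence and only then $\ZZ$-coefficients for the torsion isomorphism; the retraction you defer as ``the crux'' is exactly the short join-structure argument the paper writes out (and also cites as known).
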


\begin{proof}
First note that, for $i+j=d-1$,
a subcomplex $T \subseteq S$ contains $S^{(i)}$ if and only if $T^\vee$ is at most $j$-dimensional, and swapping roles, 
$T^\vee$ contains $S^{(j)}$ if and only if $T$ is at most $i$-dimensional.

We claim\footnote{This claim is known \cite[Lemma~6.2]{ALRS}, \cite[Lemma~4.27]{OMBook}, \cite[Lemma~7]{KrushkalRenardy};
we include the proof for the sake of completeness.} that $T^\vee$ is homeomorphic to a deformation retract 
of $(\Bd S) \setminus T$.
To justify this claim, it suffices to replace $T^\vee$ and $T$ with
their subdivisions $\Sd T^\vee$ and $\Sd T$ inside $\Sd \Bd S$, and
one can also replace  $\Sd T^\vee$ with the isomorphic subcomplex
$\hat\alpha(\Sd T^\vee)$.
By definition, every simplex $\sigma$ in $\Sd \Bd S$ is a simplicial
join $\sigma=\sigma_1 * \sigma_2$ of two of its opposite faces
$\sigma_1, \sigma_2$, lying inside  $\Sd T$ and $\hat\alpha(\Sd T^\vee)$
respectively. Thus one can perform a straight-line 
deformation retraction in the complement 
$|\sigma| \setminus |\sigma_1|$ onto $\sigma_2$,
and these retractions are all simultaneously coherent for every $\sigma$ in $\Sd \Bd S$, giving a retraction of 
$|\Sd \Bd S| \setminus |\Sd T|$ onto $\hat\alpha(\Sd T^\vee)$.

With this claim in hand, since $\Bd S$ is a $(d-1)$-sphere, Alexander duality
\cite[\S 71]{Munkres}, using reduced homology with $\QQ$ coefficients, implies
that the homology over $\QQ$ for $T$ vanishes entirely 
if and only if the same holds for $T^\vee$.  
Hence $T$ is an $i$-tree if and only if $T^\vee$ is a $j$-tree.

For the last assertion, assume $T, T^\vee$ are
$i$-trees and $j$-trees with $i+j=d-1$.  Alexander duality for reduced homology and 
cohomology with $\ZZ$ coefficients implies that
$\HH_{j-1}(T^\vee) \cong H^{i}(T)$, while the universal coefficient theorem for cohomology \cite[\S 53]{Munkres} describes $H^{i}(T)$ via the
split short exact sequence
$$
0 \rightarrow \Ext^1(\HH_{i-1}(T),\ZZ)
\rightarrow H^i(T)
\rightarrow \Hom( \HH_i(T),\ZZ)
\rightarrow 0.
$$
Here $\Hom( \HH_i(T),\ZZ)$ vanishes since $\HH_i(T)=0$, 
and $\Ext^1(\HH_{i-1}(T),\ZZ)  \cong \HH_{i-1}(T)$ 
since $\HH_{i-1}(T)$ is a finite abelian group.
Thus $\HH_{j-1}(T^\vee) \cong \HH_{i-1}(T)$, as desired.
\end{proof}

\begin{remark}
Equation~\ref{Alex-dual-weighted} is also closely related to the Duality Theorem for (modified) Tutte-Krushkal-Renardy
polynomials proven by Bajo, Burdick and Chmutov \cite[Theorem 3.4]{BajoBurdickChmutov}.
\end{remark}

\begin{corollary}[\bf Perfect square phenomenon for even-dimensional self-dual CW-balls]
\label{perfect-square-corollary}
Let $(S,\alpha)$ be a self-dual $d$-ball with $d=2k$ even, and let $\bd=\bd_k$.
Then
\begin{equation}
\label{unweighted-Laplace-pdet-as-square}
\pdet(\bd \bd^\tr)=\tau_{k-1}(S) \tau_k(S) = \tau_k(S)^2.
\end{equation}
More generally, let $X=\diag(\xx)$, $Y=\diag(\yy)$, and $L=X^\half \bd Y \bd^{\tr} X^\half$.  Then
\begin{equation}
\label{weighted-Laplace-pdet-as-square}
\pdet(L)=\xx^{[n]} \tau_{k-1}(S,\xx^{-1}) \tau_k(S,\yy) = \tau_k(S,\xx) \tau_k(S,\yy).
\end{equation}
\end{corollary}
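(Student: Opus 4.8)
The plan is to obtain both identities by specializing the two main results already proved---the cellular matrix-tree factorization of Theorem~\ref{Adin-tree-pdet-factorization} and the Alexander-duality symmetry of torsion tree enumerators in Proposition~\ref{Alexander-dual-higher-dimensional-trees}---to the middle dimension $d = 2k$. The reason the middle dimension is special is purely numerical: the consecutive pair $(k-1,k)$ satisfies $(k-1) + k = 2k - 1 = d - 1$, which is exactly the hypothesis $i + j = d-1$ under which Proposition~\ref{Alexander-dual-higher-dimensional-trees} forces the two enumerators to coincide. No further topology is needed; the perfect-square phenomenon is a formal consequence of the self-duality already encoded in that proposition, and in particular I would \emph{not} appeal to Theorem~\ref{perfect-square-theorem} (the matrix $\bd_k$ need not be symmetric or skew-symmetric here). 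For the unweighted identity~\eqref{unweighted-Laplace-pdet-as-square}, I would apply Theorem~\ref{Adin-tree-pdet-factorization}, equation~\eqref{cellular-MTT}, with $i = k$ to get $\pdet(\bd\bd^\tr) = \tau_{k-1}(S)\,\tau_k(S)$, and then invoke Proposition~\ref{Alexander-dual-higher-dimensional-trees} with $(i,j) = (k-1,k)$ to replace $\tau_{k-1}(S)$ by $\tau_k(S)$, giving $\tau_k(S)^2$.

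For the weighted identity~\eqref{weighted-Laplace-pdet-as-square}, the first equality is just the weighted half of Theorem~\ref{Adin-tree-pdet-factorization}, equation~\eqref{doubly-weighted-tree-enum}, namely $\pdet(L) = \xx^{[n]}\,\tau_{k-1}(S,\xx^{-1})\,\tau_k(S,\yy)$. For the second equality I want to recognize the factor $\xx^{[n]}\,\tau_{k-1}(S,\xx^{-1})$ as $\tau_k(S,\xx)$, which is precisely formula~\eqref{Alex-dual-weighted} with $(i,j) = (k,k-1)$. The one point that demands care---and the only genuine obstacle---is that the two cited statements carry different variable conventions: in Theorem~\ref{Adin-tree-pdet-factorization} the variable sets $\xx$ and $\yy$ label the $(k-1)$-cells and the $k$-cells independently, whereas~\eqref{Alex-dual-weighted} holds only when each $k$-cell and its opposite $(k-1)$-cell share a common variable. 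I would reconcile these by first fixing the labeling $\xx$ on the $(k-1)$-cells, then transporting it along the bijection between $(k-1)$-cells and $k$-cells induced by $\alpha$ to label the $k$-cells, and interpreting $\tau_k(S,\xx)$ through this induced labeling. With that convention in force,~\eqref{Alex-dual-weighted} applies verbatim and substitution gives $\pdet(L) = \tau_k(S,\xx)\,\tau_k(S,\yy)$.

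Thus the whole argument reduces to a matching of indices and of variable labels. Once the labeling convention in the weighted case is pinned down, both displayed equations follow immediately from the two preceding results, with no additional computation required.
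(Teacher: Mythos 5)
Your proposal is correct and follows exactly the paper's own (one-line) proof: combine Theorem~\ref{Adin-tree-pdet-factorization} with equation~\eqref{Alex-dual-weighted} of Proposition~\ref{Alexander-dual-higher-dimensional-trees}, specialized to the middle dimension. Your additional care about reconciling the variable labelings via the bijection induced by $\alpha$ (and your observation that Theorem~\ref{perfect-square-theorem} is not needed) simply makes explicit what the paper leaves implicit.
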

\begin{proof}
Combine Theorem~\ref{Adin-tree-pdet-factorization} with equation~\eqref{Alex-dual-weighted}.
\end{proof}

\noindent
We would like to express the square root of 
$\pdet(\bd \bd^\tr)$ as the pseudodeterminant of~$\bd$.  This requires
an extra geometric hypothesis, {\it antipodal} self-duality,
which is the subject of the next section.

\begin{example}
\label{polygon-example}
Let $n\geq3$ and let $S$ be the 2-dimensional cell complex whose geometric realization is an $n$-sided polygon.
Recall from Example~\ref{self-dual-polytopes-example}(a) that $S$ is a self-dual 2-ball.
We have $\tau_0(S)=n$ (because an 0-tree is a vertex) and $\tau_1(S)=n$ (because every set of $n-1$ edges forms a 1-tree).  Indeed, $\pdet(\bd\bd^\tr)=n^2$, and if we weight the vertices and edges by indeterminates $x_1,\dots,x_n$ and $y_1,\dots,y_n$, then
\[\pdet(X^{\half}\bd Y\bd^\tr X^{\half}) ~=~
\left(\sum_{i=1}^n x_i\right)
\left(\sum_{i=1}^n y_1\cdots\widehat{y_i}\cdots y_n\right).\]
These factors are $\tau_0(S,\xx)$ and $\tau(S,\yy)$ respectively.  In this case, \emph{any} permutation $\pi$ of $[n]$ yields
\[\tau_0(S,\xx) = x_1\cdots x_n \left[ \tau_1(S,\yy) \right]_{y_i=x_{\pi(i)}^{-1}}\]
as in Corollary~\ref{perfect-square-corollary} (even if the pairing between vertices and edges given by $\pi$ is not an anti-automorphism
of the face poset of~$S$).
\end{example}

\begin{remark}
Let $(S,\alpha)$ be a self-dual $d$-ball with $d=2k$ even.
Given a $k$-tree $T$ in~$S$, 
Proposition~\ref{Alexander-dual-higher-dimensional-trees} says that
the Alexander dual $T^\vee$ is a $(k-1)$-tree
having  $\HH_{k-1}(T) \cong \HH_{k-2}(T^\vee)$, so that
$$
|\HH_{k-1}(T)|^2=
|\HH_{k-1}(T)|\cdot |\HH_{k-2}(T^\vee)|
=|\HH_{k-1}(T,T^\vee)|
$$
using \eqref{pair-sequence-consequence} for the second equality.
Thus one can reinterpret the ``square roots'' $\tau_k(S)$ and
$\tau_k(S,\yy)$ in Corollary~\ref{perfect-square-corollary} as follows:
$$
\begin{aligned}
\tau_k(S) &= \sum_{\substack{k\text{-trees }\\ T\text{ in }S}} |\HH_{k-1}(T,T^\vee)|\\
\tau_k(S,\yy) &= \sum_{\substack{k\text{-trees } \\T\text{ in }S}}  \yy^T |\HH_{k-1}(T,T^\vee)|.
\end{aligned}
$$
These simplicial pairs $(T,T^\vee)$ have a similar ``self-dual'' flavor to
the objects that are counted by the square roots of
spanning tree counts in \cite{Maxwell}.

For example, when $S$ is
the simplex $\Delta_{2k+1}$ on $2k+1$ vertices,
one can reinterpret Kalai's generalization of Cayley's
formula~\cite[Thm.~3]{Kalai}.  In our notation, Kalai's theorem is
$$
\tau_k(\Delta_{2k+1})
:= \sum_{\substack{k\text{-trees } \\T\text{ in }\Delta_{2k+1}}}   
  |\HH_{k-1}(T)|^2 \\
=\sum_{\substack{k\text{-trees } \\T\text{ in }\Delta_{2k+1}}}    
  |\HH_{k-1}(T,T^\vee)| 
= (2k+1)^{\binom{2k-1}{k-1}}.
$$
In fact, he generalized a version of the Cayley-Pr\"ufer formula
for trees.  His result factors the following specialization of 
$\tau_k(\Delta_{2k+1},\yy)$, that counts
$k$-trees according to their vertex degrees:
$$
\begin{aligned}
\left[ \tau_k(\Delta_{2k+1},\yy) \right]_{\yy \rightarrow \zz}
&:=\sum_{\substack{k\text{-trees } \\T\text{ in }\Delta_{2k+1}}}  
   \zz^{\deg(T)} |\HH_{k-1}(T)|^2 \\
&=\sum_{\substack{k\text{-trees } \\T\text{ in }\Delta_{2k+1}}}  
  \zz^{\deg(T)} |\HH_{k-1}(T,T^\vee)| 
= \left( z_1 z_2 \cdots z_{2k+1}
          (z_1+\cdots+z_{2k+1})\right)^{\binom{2k-1}{k-1}} .
\end{aligned}
$$
Here $\zz^{\deg(T)}:=\prod_{j=1}^n z_j^{d_j}$, where
$d_j$ is the number of $k$-dimensional simplices in $T$
containing vertex $j$, and the specialization map
$\left[-
\right]_{\yy \rightarrow \zz}$ sends each $y_i$ to $z_{j_1} \cdots z_{j_{k+1}}$,
where $y_i$ indexes the $k$-simplex $\sigma$ with vertex set $\{j_1,\ldots,j_{k+1}\}$.
\end{remark}

\section{Antipodal self-duality} 
\label{antipodally-self-dual-section}

We now consider self-dual $d$-balls that are {\it antipodally} self-dual
in the sense of \cite[Definition 3.1]{Maxwell}.
The main result of this section, Theorem~\ref{antipodally-self-dual-square-root-theorem}, asserts that for an antipodally self-dual complex, the number $\pdet(\bd)$, and more generally the polynomials $\pdet(Y\bd^\tr)$ and $\pdet(X\bd)$, can be interpreted directly as spanning tree enumerators.
(In contrast, for a ball that is self-dual but not antipodally self-dual, these polynomials do not have an evident combinatorial meaning.)

In order to define antipodal self-duality,
we first recall the topological notion of a \emph{dual block decomposition}.
Let $X$ be a regular CW-complex with face poset $P$.
Consider the \emph{chains} in $P$, i.e., its totally ordered subsets
$\sigma_1 < \ldots < \sigma_\ell$.
Each such chain corresponds to a simplex in the barycentric subdivision $\Sd X$.
The \emph{(open) dual block} $\opendualblock(\sigma)$ of a cell $\sigma\in X$
is the union of the interiors of all simplices arising from chains with $\sigma_1=\sigma$.
In particular, $X$ is the disjoint union of its open dual blocks.
The closure of $\opendualblock(\sigma)$ is the \emph{closed dual block}, denoted by $\dualblock(\sigma)$.
Dual blocks in general CW-complexes can behave badly, but when $X$ is a $k$-manifold without boundary, they are at least (integer) homology $k$-balls; see \cite[\S 64]{Munkres}.  Here we are interested in complexes satisfying the following much stronger condition.

\begin{definition}
A self-dual $d$-ball $(S,\alpha)$ 
is {\it antipodally self-dual} if its
boundary $(d-1)$-sphere $\Bd S$ satisfies the following conditions:
\begin{itemize}
\item the dual block decomposition
$D(\Bd S)$ is also a regular CW-complex,
\item the antipodal map 
$a:|\Bd S|\to|D(\Bd S)|$ 
is a regular cellular isomorphism 
$\Bd S \longrightarrow D(\Bd S)$, and
\item 
the antipodal map respects $\alpha$, in the sense that
$a(\sigma) = D(\alpha(\sigma))$ for every cell $\sigma$.
\end{itemize}
\end{definition}
\noindent
Equivalently, $S$ is antipodally self-dual
if it can be embedded as the unit ball in $\RR^d$, with
usual antipodal map $a(x)= -x$,
such that there is a homeomorphism $\varphi:B\to\Delta(P)$
satisfying $\varphi\circ a=|\alpha|\circ\varphi$,
where $|\alpha|:\Delta(P)\to\Delta(P)$ is the simplicial automorphism induced by
the poset anti-automorphism~$\alpha$.

\begin{example}
\label{antipodally-self-dual-polytopes-example}
Not all  of the self-dual polytopes from Example~\ref{self-dual-polytopes-example} are 
{\it antipodally} self-dual:
\begin{itemize}
\item[(a,b,c)] An $m$-sided polygon,
the pyramid over it, and multiply elongated pyramids over it
will all be antipodally self-dual only for $m$ {\it odd}.
\item[(d)] The {\it diminished trapezohedron} over an $m$-sided polygon 
in $\RR^3$ is antipodally self-dual only for $m$ {\it even}.
\item[(e)] The {\it 24-cell} in $\RR^4$ is {\it not} antipodally 
self-dual; see the discussion of self-dual regular polytopes below.
\item[(f)] The pyramid in $\RR^{d+1}$ over an antipodally 
self-dual polytope in $\RR^d$ will be antipodally self-dual.
More generally, taking the pyramid over an antipodally self-dual 
$d$-ball yields an antipodally self-dual $(d+1)$-ball;
see \cite[Example 3.2]{Maxwell}.
\item[(g)] Specializing (f), a simplex is an iterated pyramid,
hence antipodally self-dual.
\item[(h)] A \emph{central reflex} is an
antipodally self-dual graph embedded on a $2$-sphere; these
were studied by Tutte \cite{Tutte}.
\end{itemize}

For $d \geq 2$, one can show that a 
self-dual regular $d$-polytope is antipodally self-dual if and only 
if the antipodal map $-1$ on $\RR^d$ 
is {\it not} an element of its symmetry group, which is always a finite reflection
group, that is, a finite {\it Coxeter group}.  It is known that
$-1$ lies in such a reflection group if and only if its {\it fundamental degrees} \cite[\S 3.7]{Humphreys} 
are all even.  For example:

\begin{itemize}
\item The $24$-cell (type $F_4$) has degrees $(2,6,8,12)$ all even, so is 
not antipodally self-dual.
\item A regular $m$-gon (type $I_2(m)$) has degrees $(2,m)$,
so is antipodally self-dual for $m$ odd.  
\item A regular $(n-1)$-simplex (type $A_{n-1}$) for $n \geq 3$
has degrees $(2,3,\ldots,n)$, so is antipodally self-dual.
\end{itemize}
\end{example}

The second author \cite{Maxwell} showed that certain 
antipodally self-dual $d$-balls with $d=2k+1$ odd carry an orientation
of their cells that make their middle two boundary maps $\bd_k, \bd_{k+1}$ take a certain
highly symmetric form, and from this deduced the following perfect square expression
\cite[Theorem 1.6]{Maxwell} for $\tau_k(S)$:
$$
\tau_k(S) := \sum_{\substack{k\text{-trees }T\\ \text{ in }S}} |\HH_{k-1}(T)|^2 
=\left( 
\sum_{\substack{ \text{self-dual }\\ k\text{-trees }T=T^\vee\\ \text{ in }S}} 
|\HH_{k-1}(T)|
\right)^2.
$$

\begin{lemma}
\label{antipodally-self-dual-middle-boundary}
Let $(S,\alpha)$ be an antipodally self-dual $2k$-ball.  Given
any choice of orientation for its $(k-1)$-cells, one can orient the
$k$-cells so that $\bd_k^\tr=(-1)^k \bd_k$.
\end{lemma}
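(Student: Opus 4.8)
The plan is to work entirely inside the boundary sphere $\Bd S \isom \Ss^{2k-1}$, since every $(k-1)$- and $k$-cell of the $2k$-ball $S$ lies in $\Bd S$ and the unique $2k$-cell affects only $\bd_{2k}$. The first observation is purely combinatorial: because $\alpha$ is an anti-automorphism of the face poset of a $2k$-ball, it reverses heights and hence sends a cell of dimension $i$ to one of dimension $2k-1-i$. Taking $i=k-1$ and $i=k$ shows that $\alpha$ restricts to a bijection between the $(k-1)$-cells and the $k$-cells. In particular $f_{k-1}=f_k=:n$, the matrix $\bd=\bd_k$ is square, and we may use $\alpha$ to label the rows (indexed by $(k-1)$-cells) and the columns (indexed by $k$-cells) by $[n]$, so that the symmetry condition $\cbd=(-1)^k\bd$ becomes the pointwise statement $[\,\sigma_j:\alpha(\sigma_i)\,]=(-1)^k[\,\sigma_i:\alpha(\sigma_j)\,]$ on incidence numbers.

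The proof then rests on two standard ingredients, assembled with care. First is Poincar\'e duality at the chain level (as in \cite[\S64]{Munkres}): for the closed $(2k-1)$-manifold $\Bd S$, the cochain complex of the cell structure is isomorphic, with a degree shift, to the chain complex of the dual block decomposition $\dualblock(\Bd S)$, the $i$-cell $e$ corresponding to the dual block $\dualblock(e)$ of dimension $(2k-1)-i$. Under this correspondence the coboundary becomes the dual-block boundary, so the matrix of $\bd_k$ is identified, up to a universal sign $\varepsilon_k$, with the transpose of the cellular boundary map of $\dualblock(\Bd S)$; concretely $[\,\dualblock(\tau):\dualblock(\sigma)\,]=\varepsilon_k\,[\,\sigma:\tau\,]$ for a $(k-1)$-cell $\sigma$ and a $k$-cell $\tau$. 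Second is the hypothesis of antipodal self-duality: the antipodal map gives a regular cellular isomorphism $a\colon\Bd S\to \dualblock(\Bd S)$ with $a(\rho)=\dualblock(\alpha(\rho))$, and, being a cellular isomorphism, it carries incidence numbers of $\dualblock(\Bd S)$ to incidence numbers of $\Bd S$. Since $a^{-1}(\dualblock(\mu))=\alpha^{-1}(\mu)$, this converts $[\,\dualblock(\tau):\dualblock(\sigma)\,]$ into an incidence number of the cells $\alpha^{-1}(\tau)$ and $\alpha^{-1}(\sigma)$, and feeding in the previous identity produces the desired transpose relation after reindexing.

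The actual content of the lemma is the \emph{orientation} choice, which I would make geometrically. Fix the given orientation of each $(k-1)$-cell $\sigma$; transport it through the antipodal homeomorphism to an orientation of the $k$-dimensional dual block $\dualblock(\alpha(\sigma))=a(\sigma)$; and then, using a fixed orientation of $\Bd S$ together with the Poincar\'e-duality convention that orienting a cell is the same data as co-orienting its dual block, read off an orientation of the $k$-cell $\alpha(\sigma)$. Declaring these to be the orientations of the $k$-cells makes the two ingredients above strictly compatible, so that the composite identity holds on the nose and yields $\cbd=(-1)^k\bd$.

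The step I expect to be the main obstacle is the sign bookkeeping, which is exactly why the full argument is deferred to the appendix. Two points need genuine care. First, one must pin down the universal sign $\varepsilon_k$ in the dual-block boundary formula at the middle dimension of a $(2k-1)$-sphere and check that, combined with the action of $a$ on orientations, it collapses to precisely $(-1)^k$ rather than some other power of $-1$. Second, the appearance of $\alpha^{-1}$ rather than $\alpha$ above: since $\alpha$ need not be an involution, one cannot simply identify $\alpha^{-1}$ with $\alpha$, and the resulting discrepancy must be absorbed using the involutivity $a^2=\mathrm{id}$ of the antipodal map together with the fact that $\alpha^2$ is a genuine poset automorphism of $P$. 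Managing these orientation and indexing conventions simultaneously, rather than any single deep idea, is the hard part.
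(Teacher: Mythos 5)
Your outline follows the same skeleton as the paper's appendix proof: the paper, too, orients a $k$-cell $\tau$ by transporting the given orientation of a $(k-1)$-cell through the antipodal map onto the relevant dual block and then converting via a fixed ambient orientation of $\Bd S$ --- it just implements this concretely with barycentric-subdivision signs (equation \eqref{decreed-orientation}) rather than with an abstract Poincar\'e-duality convention. But as written your proposal has a genuine gap: the entire content of the lemma sits inside the two computations you explicitly defer, and you supply no mechanism for carrying them out. Most importantly, you never invoke the one topological fact that makes the signs determinable at all: the antipodal map of $\Bd S\isom\Ss^{2k-1}$ has degree $(-1)^{2k}=+1$ \emph{because the sphere is odd-dimensional}, hence it preserves the chosen fundamental class of $\Bd S$ --- this is equation \eqref{antipode-is-degree-one} --- and therefore it respects the coherent orientations that every ``co-orientation'' in your scheme silently references. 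In the paper's verification this identity is used exactly once, in the numerator, and the final sign is precisely (degree of $a$) times $(-1)^k$, the latter being the sign of reversing $2k$ letters; if the degree were $-1$ (as it would be for an even-dimensional sphere) the same argument would yield $\bd^\tr=(-1)^{k+1}\bd$. So without isolating this input, your ``universal sign $\varepsilon_k$'' cannot be pinned down even in principle, and the deferred ``sign bookkeeping'' is not a routine step --- it \emph{is} the theorem. A smaller omission of the same kind: the decreed orientations must be checked to be globally consistent cell by cell (Proposition~\ref{local-orientation-criterion} and the sentence following \eqref{decreed-orientation}), and that check again uses coherence of the ambient orientation class.

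One point in your favor: your worry about $\alpha$ versus $\alpha^{-1}$ is well taken, and the paper itself is not fully explicit about it. Since $a^2=\mathrm{id}$ while $\alpha$ need not be an involution, the assignment $\tau\mapsto a(\dualblock(\tau))$ is $\alpha^{-1}$ on $k$-cells, whereas $\sigma\mapsto(\text{cell with barycenter }a(b_\sigma))$ is $\alpha$ on $(k-1)$-cells; the identity the construction actually yields is $(\bd_k)_{\sigma,\tau}=(-1)^k(\bd_k)_{\alpha^{-1}(\tau),\alpha(\sigma)}$, which is the matrix statement $\bd^\tr=(-1)^k\bd$ once rows and columns are paired by $\sigma\leftrightarrow\alpha(\sigma)$. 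But flagging this subtlety is not resolving it: saying the discrepancy ``must be absorbed'' using $a^2=\mathrm{id}$ and the automorphism $\alpha^2$ leaves it entangled with the same undetermined signs as above. In sum, your proposal is a correct high-level outline of the strategy the paper actually uses, but with the decisive computations --- the ones that produce the specific sign $(-1)^k$ --- missing, it does not constitute a proof of the lemma.
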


The proof of Lemma~\ref{antipodally-self-dual-middle-boundary} is  technical
and is deferred to the Appendix.  With such an orientation in hand, we can state the
combinatorial consequences, which follow directly from Corollary~\ref{perfect-square-corollary}.

\begin{theorem}[\bf Perfect square phenomenon for antipodally self-dual CW-balls]
\label{antipodally-self-dual-square-root-theorem}
Let $S$ be an antipodally self-dual $d$-ball with $d=2k$ even,
and orient its cells so that $\bd:=\bd_k$ has $\bd^\tr=(-1)^k \bd$.
Then 
$$
\pdet(\bd) = \tau_k(S) = \tau_{k-1}(S),
$$
so that \eqref{unweighted-Laplace-pdet-as-square} becomes
$$\pdet(\bd\bd^\tr)=(\pdet\bd)^2.$$

More generally, using the anti-automorphism $\alpha$ as a bijection between the
$k$-cells and $(k-1)$-cells with the same set of variables $\xx=(x_1,\ldots,x_n)$,
and defining $L=X^{\half} \bd Y \bd^\tr X^{\half}$,
then the two factors in \eqref{weighted-Laplace-pdet-as-square} are
$$
\begin{aligned}
\pdet(Y \bd^\tr) &= \tau_k(S,\yy) \\
\pdet(X \bd) &= \xx^{[n]} \tau_{k-1}(S,\xx^{-1}) =\tau_k(S,\xx). 
\end{aligned}
$$
\end{theorem}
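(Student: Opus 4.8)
The plan is to prove the unweighted and weighted assertions simultaneously by computing $\pdet(L)$ in two independent ways and then matching the resulting factorizations. Since the anti-automorphism $\alpha$ pairs the $k$-cells of $S$ bijectively with its $(k-1)$-cells, the middle boundary map $\bd=\bd_k$ is a \emph{square} matrix in $\ZZ^{n\times n}$, and the chosen orientation gives $\bd^\tr=(-1)^k\bd$. Hence Theorem~\ref{perfect-square-theorem} applies directly and yields $\pdet(L)=\pdet(X\bd)\,\pdet(Y\bd^\tr)$ together with its unweighted specialization $\pdet(\bd\bd^\tr)=\pdet(\bd)^2$. On the other hand, Corollary~\ref{perfect-square-corollary} evaluates the very same pseudodeterminants combinatorially as $\pdet(L)=\tau_k(S,\xx)\,\tau_k(S,\yy)$ and $\pdet(\bd\bd^\tr)=\tau_k(S)^2$. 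Comparing the two expressions for $\pdet(\bd\bd^\tr)$ already forces $\pdet(\bd)=\pm\tau_k(S)$, while comparing the two factored expressions for $\pdet(L)$ will pin down the weighted statements up to the same global sign.

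Next I would separate the individual factors rather than merely their products. By Theorem~\ref{all-minors-same-sign} all $r\times r$ principal minors of $\bd$ share a common sign $\varepsilon\in\{\pm1\}$, and because $\det\big((\bd^\tr)_{I,I}\big)=\det(\bd_{I,I})$ this same $\varepsilon$ governs $\bd^\tr$ as well; thus $\pdet(X\bd)=\varepsilon\sum_{I}\xx^I|\coker\bd_{I,I}|$ and $\pdet(Y\bd^\tr)=\varepsilon\sum_{J}\yy^J|\coker\bd_{J,J}|$. Setting $\yy=\one$ in the identity $\pdet(X\bd)\,\pdet(Y\bd^\tr)=\tau_k(S,\xx)\,\tau_k(S,\yy)$ and noting that $\sum_J|\coker\bd_{J,J}|=|\pdet\bd|=\tau_k(S)$ is a positive constant (there is at least one $k$-tree), I can divide by $\tau_k(S)$ to identify $\sum_I\xx^I|\coker\bd_{I,I}|$ with $\tau_k(S,\xx)$. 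This gives $\pdet(X\bd)=\varepsilon\,\tau_k(S,\xx)$ and $\pdet(Y\bd^\tr)=\varepsilon\,\tau_k(S,\yy)$ with the identical sign $\varepsilon$, so everything reduces to showing $\varepsilon=+1$.

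I expect this sign to be the main obstacle. When $k$ is odd the matrix $\bd$ is skew-symmetric, every principal submatrix $\bd_{I,I}$ is then skew-symmetric of even order $r$, and consequently $\det\bd_{I,I}=(\operatorname{pf}\bd_{I,I})^2\geq0$, which forces $\varepsilon=+1$. When $k$ is even, however, $\bd$ is only symmetric, and symmetry by itself does not force positive principal minors (for example $\left[\begin{smallmatrix}0&1\\1&0\end{smallmatrix}\right]$ has determinant $-1$), so here one must invoke the topology rather than pure linear algebra. My approach would be to evaluate $\det\bd_{I,I}$ homologically: a nonvanishing principal minor corresponds under Gale and Alexander duality to a pair $(T,T^\vee)$ with $T$ a $k$-tree and $T^\vee$ its dual $(k-1)$-tree, and \eqref{pair-sequence-consequence} gives $|\det\bd_{I,I}|=|\HH_{k-1}(T,T^\vee)|=|\HH_{k-1}(T)|^2$. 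The crux is to show that the specific orientation produced by Lemma~\ref{antipodally-self-dual-middle-boundary} makes this equality signed, i.e.\ $\det\bd_{I,I}=+|\HH_{k-1}(T)|^2$; tracking how the antipodal identification of $C_k$ with $C_{k-1}$ interacts with the chosen cell orientations is precisely the delicate bookkeeping I anticipate being hard.

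Finally, once $\varepsilon=+1$ is established I would read off $\pdet(\bd)=\tau_k(S)$, $\pdet(X\bd)=\tau_k(S,\xx)$, and $\pdet(Y\bd^\tr)=\tau_k(S,\yy)$, and then invoke Proposition~\ref{Alexander-dual-higher-dimensional-trees} with $i=k$, $j=k-1$, $i+j=d-1$ to supply the equality $\tau_k(S)=\tau_{k-1}(S)$ and, via \eqref{Alex-dual-weighted}, the identity $\pdet(X\bd)=\xx^{[n]}\tau_{k-1}(S,\xx^{-1})=\tau_k(S,\xx)$. The relation $\pdet(\bd\bd^\tr)=(\pdet\bd)^2$ then follows at once from \eqref{unweighted-perfect-square-equation}, completing the proof.
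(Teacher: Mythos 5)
Your skeleton is exactly the paper's: the paper proves this theorem in a single sentence, by combining the orientation from Lemma~\ref{antipodally-self-dual-middle-boundary} with Theorem~\ref{perfect-square-theorem} and Corollary~\ref{perfect-square-corollary}. Where you go beyond the paper is in observing that this argument only determines $\pdet(\bd)$, $\pdet(X\bd)$, $\pdet(Y\bd^\tr)$ up to a common sign $\varepsilon$: your reduction (via Theorem~\ref{all-minors-same-sign} and the identification of $\sum_I\xx^I|\coker\bd_{I,I}|$ with $\tau_k(S,\xx)$) is sound, and your Pfaffian argument correctly settles $\varepsilon=+1$ when $k$ is odd, since every principal submatrix of a skew-symmetric matrix has nonnegative determinant. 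For $k$ odd your proposal is therefore complete, and more careful than the paper itself, which never addresses the sign at all.

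The gap is the case $k$ even, and you should know that it cannot be closed: the claim $\varepsilon=+1$ is false there, so the ``delicate bookkeeping'' you propose has no chance of succeeding. The paper's own Remark~\ref{simplex-middle-boundary-charpoly-example} supplies the counterexample. Let $S$ be the simplex on $n=2k+1$ vertices with $k$ even (Example~\ref{duality-for-odd-simplices}). Any orientation making $\bd=\bd_k$ symmetric leaves the diagonal of $\bd$ zero (complementary faces are disjoint), so $\Trace(\bd)=0$; moreover $\bd\bd^\tr=\bd^2$ has the single nonzero eigenvalue $n$ with multiplicity $C=\binom{2k}{k}$, independently of orientation. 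Hence $\bd$ has eigenvalues $\pm\sqrt{n}$, each of multiplicity $C/2$, and $\pdet(\bd)=(-1)^{C/2}n^{C/2}$, while $\tau_k(S)=n^{C/2}$ by Kalai's formula (or by Corollary~\ref{perfect-square-corollary}). Since $C/2=\binom{2k-1}{k-1}$ is odd whenever $k$ is a power of two, taking $k=2$ gives $\pdet(\bd)=-125$ but $\tau_2(S)=125$, i.e.\ $\varepsilon=-1$. So for $k$ even the statement being proved (and the paper's one-line derivation, which silently discards the sign) holds only up to sign: the honest conclusion is $\pdet(\bd)=\pm\tau_k(S)$, with a sign that genuinely depends on the complex. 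Your instinct that the even case is the crux was correct, but its resolution is a counterexample rather than a proof, so no completion of your outline (nor of the paper's) can establish the theorem as stated.
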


\begin{question}
Does one similarly obtain a perfect square with an interestingly interpreted 
square root, 
when enumerating spanning
trees of middle dimension in antipodally self-dual $d$-balls for $d \equiv 1 \bmod{4}$?
\end{question}

\noindent
The second author \cite{Maxwell} answered this question for $d\equiv 3 \bmod{4}$,
and Theorem~\ref{antipodally-self-dual-square-root-theorem} covers the cases 
$d \equiv 0,2 \bmod{4}$.

\begin{example}
Let $S$ be a polygon with $n$ sides.  Recall
from Example~\ref{polygon-example} the calculation of the pseudodeterminants of the unweighted and weighted Laplacians of $S$.
In the case that $n=2m+1$ is odd, so that $S$ is antipodally self-dual, we can interpret the pseudodeterminant of the boundary matrix $\bd=\bd_1(S)$
as well.  Label the vertices $v_0,\dots,v_{2m}$
in cyclic order and let $e_{i,i+1}$ be the edge joining $v_i$ and $v_{i+1}$ (with all indices taken modulo~$n$).
Then the map $\sigma\overset{\alpha}{\longmapsto}\tilde\sigma$ given by
\[\tilde{v}_i = e_{i+m,i+m+1}, \quad \tilde{e}_{i,i+1}=v_{i-m}\]
makes $S$ into an antipodally self-dual complex.  
Orienting the edges so that $\bd_1(e_{i,i+1})=v_{i+1}-v_i$ makes the
boundary map $\bd$ antisymmetric,
i.e., $\bd^\tr = (-1)^k \bd=-\bd$.  One can then check that $\rank\bd=n-1$ and
$$
\pdet(\bd)=n-1=\tau_0(S)=\tau_1(S) = \sqrt{\pdet(\bd \bd^\tr)}
$$
and more generally, $L=X^{\half} \bd Y \bd^\tr X^{\half}$ has
$\pdet( L ) = \pdet( X \bd ) \pdet (Y \bd^\tr )$  
with
$$
\begin{aligned}
\pdet (Y \bd^\tr ) &=\tau_1(S,\yy),\\
\pdet( X \bd ) &=\tau_0(S,\xx).
\end{aligned}
$$
\end{example}

\begin{example} \label{duality-for-odd-simplices}
As in Example~\ref{antipodally-self-dual-polytopes-example}(g), let $k\geq1$ and let $S$ be the simplex on $N=2k+1$ vertices, which is an antipodally self-dual $d$-ball with $d=2k=N-1$.  The duality map is simply complementation: $\tilde\sigma=[N]\sm\sigma$ (regarding each face of $S$ abstractly as a subset of $[N]$).
Also, the (skew-)symmetric orientation of Lemma~\ref{antipodally-self-dual-middle-boundary} can be made explicit.
In the standard orientation of $S$, the boundary operators are as follows: if $\sigma=\{v_1,\dots,v_r\}$ with $v_1<\cdots<v_r$, then
\begin{equation}
\label{simplex-boundary-map}
\bd(\sigma) = \sum_{j=1}^r (-1)^{j-1} (\sigma\sm\{v_j\}).
\end{equation}
Write $\bd_k(S)$ as a $\binom{N}{k}\x\binom{N}{k}$ matrix with columns and rows corresponding to $k$-faces and $(k-1)$-faces respectively, so that the $i^{th}$ row and $i^{th}$ column are indexed by complementary faces.  Then, for each $k$-face  $\sigma$, multiply the corresponding column by $(-1)^{\|\sigma\|}$,
where $\|\sigma\|=\sum_{v\in\sigma}v$.  The resulting matrix is then symmetric or skew-symmetric according as $k$ is even or odd; we omit the proof.

\begin{remark}
\label{simplex-middle-boundary-charpoly-example}
In fact, we can completely determine the spectrum of $\bd_k$ in
the situation of Example~ \ref{duality-for-odd-simplices}.  Let $S$ be the 
simplex on $n:=2k+1$ vertices and let
$$
A:=\binom{n}{k+1} =\binom{2k+1}{k+1},
\qquad
B:=\binom{n-1}{k+1} =\binom{2k}{k+1},
\qquad
C:=\binom{n-1}{k} =\binom{2k}{k},
$$
so that  $A=B+C$.  Then:

\begin{proposition} 
 The middle boundary map  
$\bd=\bd_k$ in $\ZZ^{A \times A}$ of $S$, oriented
so that $\bd^\tr = (-1)^k\bd$, satisfies
$$
\begin{aligned}
\det(t \one-\bd \bd^\tr)
&=t^B (t + n)^C \\
\det(t \one-\bd)
&= \begin{cases}
t^B \left( t \pm i \sqrt{n} \right)^{\frac{C}{2}} 
=t^B ( t^2 + n)^{\frac{C}{2}}
& \text{ if } k \text{ is }odd,\\
t^B ( t \pm \sqrt{n})^{\frac{C}{2}}
=t^B ( t^2 - n)^{\frac{C}{2}}
& \text{ if } k \text{ is }even.\\
\end{cases}
\end{aligned}
$$
\end{proposition}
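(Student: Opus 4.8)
The plan is to determine the full spectrum of $\bd\bd^\tr=\bd_k\bd_k^\tr$ first, and then transfer the result to $\bd$ itself by means of the relation $\bd\bd^\tr=(-1)^k\bd^2$ coming from $\bd^\tr=(-1)^k\bd$, exactly as in Remark~\ref{square-root-of-spectrum-remark}. The engine is the classical scalar identity for the \emph{full} simplex: on $C_{k-1}(S)$ one has
\[
\bd_{k-1}^\tr\bd_{k-1}+\bd_k\bd_k^\tr=n\one.
\]
First I would establish this by a direct entrywise computation in the standard basis of oriented faces. For a $(k-1)$-face $\rho$ the diagonal entry is (number of $k$-faces containing $\rho$) $+$ (number of $(k-2)$-faces contained in $\rho$) $=(n-k)+k=n$; for two $(k-1)$-faces differing in a single vertex, the up--down term $\bd_k\bd_k^\tr$ and the down--up term $\bd_{k-1}^\tr\bd_{k-1}$ contribute $\pm1$ with opposite signs and cancel, and all remaining entries vanish. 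Note that re-signing the columns of $\bd_k$ to arrange $\bd^\tr=(-1)^k\bd$, as in Lemma~\ref{antipodally-self-dual-middle-boundary}, multiplies $\bd_k\bd_k^\tr$ on both sides by the same diagonal $\pm1$ matrix and hence leaves it unchanged, so this computation may be done in whatever orientation is convenient.

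Next I would read off the spectrum of $\bd\bd^\tr$. Since $\bd_{k-1}\bd_k=0$, the summand $\bd_{k-1}^\tr\bd_{k-1}$ annihilates $\im\bd_k$, so on that subspace the identity above reduces to $\bd_k\bd_k^\tr=n\one$; on the orthogonal complement $\ker\bd_k^\tr=(\im\bd_k)^\perp$ the operator $\bd_k\bd_k^\tr$ vanishes. Hence every nonzero eigenvalue of the positive semidefinite matrix $\bd\bd^\tr$ equals $n$, with multiplicity $\rank\bd_k=C$, while $0$ occurs with multiplicity $A-C=B$; here $\rank\bd_k=\binom{n-1}{k}=C$ by the rank formula \eqref{rank-of-i-th-boundary}. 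These eigenvalue data give the first displayed factorization.

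Finally I would pass to $\bd$. From $\bd\bd^\tr=(-1)^k\bd^2$ it follows that $\bd^2$ has nonzero eigenvalue $(-1)^k n$ (multiplicity $C$) and eigenvalue $0$ (multiplicity $B$), so each nonzero eigenvalue $\mu$ of $\bd$ satisfies $\mu^2=(-1)^k n$, as in \eqref{square-root-eigenvalues}. When $k$ is even, $\bd$ is symmetric, so its spectrum is real and its nonzero eigenvalues are $\pm\sqrt n$; when $k$ is odd, $\bd$ is skew-symmetric, so its nonzero eigenvalues are purely imaginary and occur in conjugate pairs $\pm i\sqrt n$. It remains to check that the two signs occur with equal multiplicity $C/2$. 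In the skew-symmetric case this is automatic from conjugate pairing (and forces $C$ to be even). In the symmetric case I would use $\Trace\bd=0$: each diagonal entry of $\bd$ is indexed by a $(k-1)$-face together with the \emph{complementary} $k$-face, and since $k+(k+1)=n$ these faces are disjoint, so the $(k-1)$-face is not a facet of the $k$-face and the entry is $0$; a vanishing trace then forces the multiplicities of $+\sqrt n$ and $-\sqrt n$ to agree. Assembling these eigenvalues yields the stated factorizations of $\det(t\one-\bd)$.

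The main obstacle is the scalar identity $\bd_{k-1}^\tr\bd_{k-1}+\bd_k\bd_k^\tr=n\one$ of the first paragraph: all the genuine combinatorial content---in particular the off-diagonal cancellation, which is special to the simplex---resides there, and everything afterward is routine linear algebra. An alternative to the entrywise computation would invoke the $\sym_n$-action, under which the Laplacian is equivariant and Schur's lemma constrains the eigenvalues, but the direct argument is more elementary and self-contained. A secondary point needing care is the equal splitting of multiplicities when $k$ is even, which I handle above through the vanishing of $\Trace\bd$.
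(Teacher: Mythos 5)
Your proof is correct, and it arrives at exactly the eigenvalue data the paper needs, but by a genuinely different and more self-contained route for the first half. Where you derive the spectrum of $\bd\bd^\tr$ from the scalar identity $\bd_{k-1}^\tr\bd_{k-1}+\bd_k\bd_k^\tr=n\one$ on the full simplex (together with $\bd_{k-1}\bd_k=0$, which shows the down--up term annihilates $\im\bd_k$, and orthogonality, which handles $\ker\bd_k^\tr$), the paper simply cites \cite[Theorem 1.1]{DuvalR} for the assertion that $\bd\bd^\tr$ has unique nonzero eigenvalue $n$ with multiplicity $C$. Your route costs one elementary computation---the entrywise verification, whose off-diagonal cancellation is the standard consequence of $\bd^2=0$---but buys complete self-containedness, while the paper's citation is shorter and situates the fact in the general theory of Laplacian-integral (shifted) complexes. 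Your observation that re-signing the $k$-cells leaves $\bd_k\bd_k^\tr$ unchanged is exactly the point needed to transfer the computation from the standard orientation to that of Lemma~\ref{antipodally-self-dual-middle-boundary}, though the correct reason is that the sign matrix and its transpose meet and cancel in the middle: for diagonal $D$ with $D^2=\one$ one has $(\bd D)(\bd D)^\tr=\bd\bd^\tr$, rather than a two-sided multiplication. The second half of your argument---passing to $\bd$ via $\bd\bd^\tr=(-1)^k\bd^2$, using conjugate pairing of imaginary eigenvalues in the skew-symmetric case, and using $\Trace\bd=0$ (each diagonal entry pairs complementary, hence disjoint, faces) to force equal multiplicities of $\pm\sqrt{n}$ in the symmetric case---is essentially identical to the paper's proof, which invokes Remark~\ref{square-root-of-spectrum-remark} and the same trace argument. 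One remark on the statement rather than on your proof: the eigenvalue data that both you and the paper establish literally give $\det(t\one-\bd\bd^\tr)=t^B(t-n)^C$; the displayed $(t+n)^C$ matches instead the unsigned characteristic polynomial $\det(t\one+\bd\bd^\tr)$ used elsewhere in the paper, so the sign there is a typo in the proposition for which neither proof is responsible.
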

\begin{proof}
The first equality is equivalent to the assertion that $\bd \bd^\tr$ has only one nonzero eigenvalue $n$, with multiplicity $C$.  This follows immediately from
\cite[Theorem 1.1]{DuvalR}.

The spectrum for $\bd$ can be deduced 
from the discussion at the end of Remark~\ref{square-root-of-spectrum-remark}
as follows.  When $k$ is odd, we have $\bd^\tr=-\bd$, so the eigenvalues of $\bd$ come in purely imaginary complex conjugate pairs $\pm i \sqrt{n}$.  
When $k$ is even, we have $\bd^\tr=\bd$, it must be that
$\bd$ has only two nonzero real
eigenvalues $+\sqrt{n}, -\sqrt{n}$, with multiplicities summing to the rank
$C$.  Summing these eigenvalues with multiplicity 
gives the trace of $\bd$, which is $0$, for the following reason: by the definition of $\alpha$,
every diagonal entry of $\bd$ corresponds to a pair of complementary (in particular disjoint) simplices, hence is zero
by~\eqref{simplex-boundary-map}.
Hence both multiplicities are $\frac{C}{2}$.
\end{proof}

\end{remark}
\end{example}

\section{Appendix:  orienting antipodally self-dual balls}
\label{antipodally-self-dual-appendix}

The goal of this appendix is to prove Lemma~\ref{antipodally-self-dual-middle-boundary}, recalled here:

\vskip.1in
\noindent
{\bf Lemma~\ref{antipodally-self-dual-middle-boundary}.}
\emph{Let $(S,\alpha)$ be an antipodally self-dual $2k$-ball.  Given
any choice of orientation for its $(k-1)$-cells, one can orient the
$k$-cells so that $\bd_k^\tr=(-1)^k \bd_k$.}

\vskip.1in
\noindent
As in \cite{Maxwell},
we first recall facts about 
orienting cells in regular CW-complexes and cellular boundary maps,
then give the procedure for orienting $k$-cells as in the lemma, and 
finally check that it works.

\subsection{Orientations in regular CW-complexes}
\label{orientation-discussion-subsection}

Useful references are  
Bj\"orner~\cite{Bjorner},
Lundell and Weingram  \cite[Chap. V]{LundellWeingram},
Munkres \cite[\S 39]{Munkres}.

Orienting a $k$-cell $\sigma$ in a CW-complex means choosing one of
the two generators for $\HH_k(\sigma,\Bd \sigma) \cong \ZZ$.   When the
CW-complex comes from an abstract simplicial complex,
and $\sigma$ is an  $k$-simplex with vertices $\{v_0,v_1,\ldots,v_k\}$ 
this choice of generator is equivalent to the choice of a function 
$$
\{\text{linear orderings of }\{v_0,v_1,\ldots,v_k\} \}
\xrightarrow{\sgn}
\{+1,-1\}
$$
that is {\it alternating}, i.e.
$$
\sgn(v_{\sigma(0)},\ldots,v_{\sigma(k)}) 
 = (-1)^\sigma \sgn(v_0,\ldots,v_k),
$$
where $(-1)^\sigma$ denotes the usual sign of the permutation $\sigma$.

In a {\it regular} CW-complex, this choice of
orientation on a $k$-cell $\sigma$ is equivalent to a choice
of orientation for any $k$-simplex $\widehat{\sigma}$ of  its barycentric subdivision
$\Sd \sigma$: 
if $x$ is any point in the interior of $\widehat{\sigma}$, one has 
isomorphisms
$$
\xymatrix{
\HH_k(\sigma,\Bd \sigma) \ar^{\sim}[r]&  \HH_k(\sigma,\sigma\setminus \{x\}) \\
\HH_k(\widehat{\sigma},\Bd \widehat{\sigma}) \ar^{\sim}[r] &  \HH_k(\widehat{\sigma},\widehat{\sigma} \setminus \{x\}) \ar^{\sim}[u]
}
$$
coming from excision of
\begin{itemize}
\item
$\sigma \setminus \widehat{\sigma}\subset \sigma \setminus \{x\}$ for the vertical map,
\item
$\sigma \setminus \Bd \sigma \subset \sigma \setminus \{x\}$ for the top horizontal map, and
\item
$\widehat{\sigma} \setminus \Bd \sigma \subset \widehat{\sigma} \setminus \{x\}$ for the bottom horizontal map.
\end{itemize}

\noindent
Conversely, one has the following.
\begin{proposition}
\label{local-orientation-criterion}
Given a $k$-cell $\sigma$ in a regular CW-complex, 
a collection of orientations on all $k$-simplices 
$\widehat{\sigma}=\{b_0,\ldots,b_{k-1},b_\sigma\}$ inside $\Sd \sigma$ 
comes from one global orientation of $\sigma$ if and only if these orientations
satisfy all local compatibilities
$$
\sgn(b_0,\ldots,b_{i-1},b_i,b_{i+1},\ldots,b_{k-1},b_\sigma)\\
~=~-\sgn(b_0,\ldots,b_{i-1},b'_i,b_{i+1}\ldots,b_{k-1},b_\sigma)
$$
for adjacent $k$-simplices differing in
barycenters $b_i, b'_i$ of $i$-cells with $0 \leq i \leq k-1$.
\end{proposition}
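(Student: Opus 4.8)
The plan is to reinterpret both sides homologically and reduce the claim to the identification of a relative fundamental class. Since $S$ is regular, the closed cell $\sigma$ is homeomorphic to a $k$-ball $D^k$, so $L:=\Sd\sigma$ is a triangulation of $D^k$ whose boundary subcomplex is $\Sd\Bd\sigma$, a triangulation of $\Bd\sigma\isom S^{k-1}$. Because $L$ has no simplices of dimension exceeding $k$, the relative simplicial chain group in top degree gives $\HH_k(\sigma,\Bd\sigma)=\HH_k(L,\Sd\Bd\sigma)=Z_k(L,\Sd\Bd\sigma)$, a subgroup of the free group $C_k(L)$ on the top simplices; this group is $\isom\ZZ$ by the homology of the ball. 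A choice of orientation on every top simplex $\widehat\sigma$ is recorded by the chain $z:=\sum_{\widehat\sigma}[\widehat\sigma]$, where each $[\widehat\sigma]$ denotes the chosen generator. By the excision isomorphisms recalled above, a global orientation $\mu\in\HH_k(\sigma,\Bd\sigma)$ restricts to the chosen orientation on $\widehat\sigma$ exactly when the coefficient of $[\widehat\sigma]$ in $\mu$ equals $+1$; hence ``the orientations come from one global orientation'' is equivalent to the single statement that $z$ is a generator of $\HH_k(\sigma,\Bd\sigma)$.

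First I would reduce the local-compatibility hypothesis to the condition $\bd z\in C_{k-1}(\Sd\Bd\sigma)$, i.e.\ that $z$ is a relative cycle. The combinatorial input is that an interior $(k-1)$-simplex of $L$ is exactly one of the form $\{b_{\rho_0},\dots,b_{\rho_{k-2}},b_\sigma\}$ (its chain contains $\sigma$), and that such a simplex is a face of \emph{exactly two} top simplices, which differ in a single barycenter $b_i\leftrightarrow b_i'$ of an $i$-cell with $0\le i\le k-1$. This ``exactly two'' count is the diamond property of the face poset of a regular CW-complex: filling the unique missing dimension $i$ in the chain $\rho_0<\dots<\rho_{k-2}<\sigma$ amounts to choosing one of the two faces strictly between its bracketing elements. (A $(k-1)$-simplex lying in $\Sd\Bd\sigma$ is instead a face of exactly one top simplex, obtained by appending $\sigma$.) Consequently the coefficient of an interior $(k-1)$-face $\tau$ in $\bd z$ is a sum of two incidence numbers coming from the two top simplices meeting along $\tau$, and the stated identity $\sgn(\dots,b_i,\dots)=-\sgn(\dots,b_i',\dots)$ is precisely the condition that these two terms cancel. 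Thus the local compatibilities hold for all interior faces if and only if $\bd z$ is supported on $\Sd\Bd\sigma$, i.e.\ $z\in Z_k(L,\Sd\Bd\sigma)=\HH_k(\sigma,\Bd\sigma)$.

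It then remains to promote ``relative cycle'' to ``generator.'' Since $\sigma\isom D^k$ is a connected manifold-with-boundary, $L$ is a strongly connected pseudomanifold-with-boundary: its top simplices are linked into a connected graph by shared interior $(k-1)$-faces. For any relative cycle the cancellation condition at each interior face forces the coefficients of two adjacent top simplices to agree up to sign, so strong connectivity makes all coefficients equal in absolute value; a generator of $\HH_k(\sigma,\Bd\sigma)\isom\ZZ$, being primitive, must therefore have every coefficient equal to $\pm1$, and conversely any relative cycle whose coefficients are all $\pm1$ is primitive and hence a generator. As $z$ has all coefficients $+1$ by construction, $z$ is a relative cycle if and only if it is a generator. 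Chaining these equivalences gives the proposition; the forward direction (a global orientation yields locally compatible ones) is recovered along the way, since such an orientation is in particular a relative cycle.

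The main obstacle is the sign bookkeeping of the second paragraph: matching the alternating function $\sgn$ of the statement with the cancellation of simplicial incidence numbers, and justifying the ``exactly two top simplices'' count from regularity via the diamond property, together with the strong connectivity of $L$. Once these structural facts about $\Sd\sigma$ are in hand, the passage from cycle to generator via $\HH_k\isom\ZZ$ and $\pm1$ coefficients is routine.
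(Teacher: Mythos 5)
The paper itself gives no proof of Proposition~\ref{local-orientation-criterion}: it is stated as recalled background at the start of the appendix, with pointers to Bj\"orner, Lundell--Weingram, and Munkres, so there is no in-paper argument to compare against. Your proposal supplies a genuine proof, and it is correct. The strategy --- identify $\HH_k(\sigma,\Bd\sigma)$ with the group of relative cycles in $C_k(\Sd\sigma)$ (valid because $\Sd\sigma$ has no $(k+1)$-simplices), encode the chosen simplex orientations as the chain $z=\sum_{\widehat\sigma}[\widehat\sigma]$, use the paper's excision isomorphisms to see that ``comes from one global orientation'' means precisely that $z$ generates $\HH_k(\sigma,\Bd\sigma)\isom\ZZ$, and then prove (i) local compatibility holds iff $z$ is a relative cycle and (ii) a relative cycle with all coefficients $+1$ is automatically a generator --- is sound, and the sign bookkeeping in (i) is exactly right: the two incidence numbers of an interior face $\{b_0,\ldots,\widehat{b_i},\ldots,b_{k-1},b_\sigma\}$ in $\bd z$ carry the same factor $(-1)^i$, so their cancellation is literally the displayed compatibility $\sgn(\ldots,b_i,\ldots)=-\sgn(\ldots,b'_i,\ldots)$. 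The three structural inputs you flag are all standard and correctly invoked: interior $(k-1)$-simplices are the flags containing $b_\sigma$ and lie in exactly two top simplices (the diamond property of length-two intervals in the face poset of a regular CW-complex, equivalently the manifold property of $\Bd\tau$), boundary $(k-1)$-simplices lie in exactly one, and $\Sd\sigma$, being a triangulation of a connected ball, is strongly connected through interior faces. One small streamlining: in step (ii) you need not argue separately that a generator is primitive with $\pm1$ coefficients; if $z$ is a relative cycle, write $z=c\mu$ with $\mu$ a generator, note strong connectivity forces all coefficients of $\mu$ to share one absolute value $a$, and compare with the coefficients $+1$ of $z$ to get $a=1$ and $c=\pm1$ at once.
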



We now describe the cellular boundary map $\partial_k$ explicitly.
This map is the connecting homomorphism
$$
\HH_k(X^{(k)},X^{(k-1)}) \rightarrow \HH_{k-1}(X^{(k-1)},X^{(k-2)})
$$
from the long exact sequence of the triple $(X^{(k)},X^{(k-1)},X^{(k-2)})$.
(Recall that $X^{(k)}$ denotes the $k$-skeleton of a CW-complex $X$.)
When $X$ is a {\it regular} CW-complex, one has for each $k$ an isomorphism
$$
\HH_k(X^{(k)},X^{(k-1)})
\cong 
\bigoplus_{k\text{-cells }\tau} \HH_k(\tau,\Bd \tau),
$$
and $\bd_k$ takes the form
$$
\displaystyle \bigoplus_{k\text{-cells } \tau} \HH_k(\tau,\Bd \tau) 
\xrightarrow{\bd_k}
\displaystyle \bigoplus_{(k-1)\text{-cells } \sigma} \HH_{k-1}(\sigma,\Bd \sigma)
$$
in which $(\bd_k)_{\sigma,\tau}$ for an oriented $k$-cell $\tau$ and 
$(k-1)$-cell $\sigma$ is $\pm 1$ when $\tau$ contains $\sigma$, 
and $0$ otherwise.  A way to compute 
$(\bd_k)_{\sigma,\tau}$ is as follows.  Choose any 
$(k-1)$-simplex 
\begin{equation}
\label{choice-of-subsimplex}
\widehat{\sigma}=\{b_0,b_1,\ldots,b_{k-1}\} \text{ in }\Sd \sigma,
\end{equation}
where $b_i$ is the barycenter of an $i$-cell inside
$\sigma$ for $i=0,1,\ldots,k-1$, so $b_{k-1}=b_\sigma$ is the
barycenter of $\sigma$.  Then 
\begin{equation}
\label{epsilon-via-subdivision}
(\bd_k)_{\sigma,\tau} = 
\frac{\sgn(b_0,b_1,\ldots,b_{k-1},b_\tau)}
{\sgn(b_0,b_1,\ldots,b_{k-1})}.
\end{equation}

\subsection{Orienting $k$-cells as in Lemma~\ref{antipodally-self-dual-middle-boundary} }
\label{the-orientation-procedure-section}

Let $(S,\alpha)$ be a self-dual $2k$-ball, with a given
orientation for all of its $(k-1)$-cells $\sigma$.  
Once and for all, for each $\sigma$, choose 
a $(k-1)$-simplex $\widehat{\sigma}$ inside $\Sd \sigma$
as in \eqref{choice-of-subsimplex}.
The discussion in \S\ref{orientation-discussion-subsection} above
implies that each such simplex $\widehat{\sigma}$ has also been oriented.

We wish to now orient all the $k$-cells of $S$.
First fix an orientation class $z$ generating
$
\HH_{2k-1}(\Sd \Bd S) \cong \ZZ,
$
and orient all the $(2k-1)$-simplices 
$\{b_0,b_1,\ldots,b_{2k-1}\}$ of $\Sd(\Bd S)$ in such a way that $z$
is their sum with all coefficients $+1$.
These orientations will satisfy
\begin{equation}
\label{antipode-is-degree-one}
\sgn(a(b_0),a(b_1),\ldots,a(b_{2k-1}) )
~=~\sgn(b_0,b_1,\ldots,b_{2k-1})
\end{equation}
because the antipodal map $a$ on the $(2k-1)$-sphere $\Bd S$
has degree $(-1)^{2k}=+1$, so $a(z)=+z$ in $\HH_{2k-1}(\Sd \Bd S)$;
see, e.g., \cite[\S V.4]{LundellWeingram} or
\cite[Theorem 21.3]{Munkres}.

To orient a typical $k$-cell $\tau$ of $S$, we use 
that its dual block $D(\tau)$ has antipodal image
$a(D(\tau))=\widetilde{\tau}$, which is a $(k-1)$-cell, and hence
has already been oriented.   This means that the $(k-1)$-simplex
$\widehat{\widetilde{\tau}}=\{\widetilde{b}_0,\ldots,\widetilde{b}_{k-1}\}$
in $\Sd \widetilde{\tau}$ has also been oriented, and 
any $k$-simplex $\{b_0,\ldots,b_{k-1},b_k\}$ in $\Sd \tau$
satisfies 
$
b_k=b_\tau\left(=a(\widetilde{b}_{k-1})\right)
$
and has join with $\{a(\widetilde{b}_0),\ldots,a(\widetilde{b}_{k-1})\}$
giving a maximal simplex in $\Sd(\Bd S)$:
$$
\{
b_0,\ldots,b_{k-1},
a(\widetilde{b}_{k-1}),\ldots,a(\widetilde{b_0})
\}.
$$
Now orient $\tau$ via orientations of all $k$-simplices $\{b_0,\ldots,b_k\}$ in $\Sd \tau$, by decreeing that
\begin{equation}
\label{decreed-orientation}
\sgn(b_0,\ldots,b_k):=
\frac{
\sgn(
b_0,\ldots,b_{k-1},
a(\widetilde{b}_{k-1}),\ldots,a(\widetilde{b_0})
)}{\sgn(\widetilde{b}_0,\ldots,\widetilde{b}_{k-1})}.
\end{equation}
Note that this collection of orientations satisfies the local criteria in Proposition~\ref{local-orientation-criterion}: replacing $b_i$ by $b'_i$ with $0 \leq i \leq k-1$
on the left side of \eqref{decreed-orientation} has the
effect on the right side of doing the same replacement in
the numerator, which will only multiply the numerator by $-1$,
since the orientation
of maximal simplices of $\Sd(\Bd S)$ in the orientation class $z$
is consistent with an orientation on all maximal cells of $\Bd S$.

\subsection{Proof of Lemma~\ref{antipodally-self-dual-middle-boundary}}

We wish to verify that the procedure described in Subsection~\ref{the-orientation-procedure-section} orients
the $k$-cells $\tau$ in such a way that all nested pairs $\sigma \subset \tau$ of
$(k-1)$ and $k$-cells satisfy
$
(\bd_k)_{\sigma,\tau}
=(-1)^k (\bd_k)_{\widetilde{\tau},\widetilde{\sigma}}.
$

If one names the vertex sets in the two $(k-1)$-simplices
$$
\begin{aligned}
\widehat{\sigma}&=\{b_0,\ldots,b_{k-1}\} 
 \text{ in } \Sd \sigma,\\
\widehat{\widetilde{\tau}}&=\{\widetilde{b}_0,\ldots,\widetilde{b}_{k-1}\}  
 \text{ in } \Sd \widetilde{\tau},
\end{aligned}
$$
then one will have these vertex sets for these two $k$-simplices:
$$
\begin{array}{rcl}
\{b_0,\ldots,b_{k-1},b_k\} 
& \text{ in }  \Sd \tau 
& \text{ with }b_k=b_\tau=a(\widetilde{b}_{k-1}),\\
\{\widetilde{b}_0,\ldots,\widetilde{b}_{k-1},\widetilde{b}_k\} 
& \text{ in }  \Sd \widetilde{\sigma}  
& \text{ with }\widetilde{b}_k=b_{\widetilde{\sigma}}=a(b_{k-1}).
\end{array}
$$
Then \eqref{epsilon-via-subdivision} and \eqref{decreed-orientation} will imply
$$
(\bd_k)_{\sigma,\tau}
=\frac{\sgn(b_0,\ldots,b_{k-1},a(\widetilde{b}_{k-1}),\ldots,a(\widetilde{b_0}))}
         {\sgn(b_0,\ldots,b_{k-1})
           \sgn(\widetilde{b}_0,\ldots,\widetilde{b}_{k-1})} \quad \text{ and } \quad
(\bd_k)_{\widetilde{\tau},\widetilde{\sigma}} 
=\frac{\sgn(\widetilde{b}_0,\ldots,\widetilde{b}_{k-1},a(b_{k-1}),\ldots,a(b_0))}
         {\sgn(\widetilde{b}_0,\ldots,\widetilde{b}_{k-1})\sgn(b_0,\ldots,b_{k-1})}.
$$
Therefore
$$
\begin{aligned}
\frac{ (\bd_k)_{\sigma,\tau} }
        { (\bd_k)_{\widetilde{\tau},\widetilde{\sigma}} }
&=\frac{\sgn(b_0,\ldots,b_{k-1},a(\widetilde{b}_{k-1}),\ldots,a(\widetilde{b_0}))}
         {\sgn(\widetilde{b}_0,\ldots,\widetilde{b}_{k-1},a(b_{k-1}),\ldots,a(b_0))} \\
&=\frac{\sgn(a(b_0),\ldots,a(b_{k-1}),\widetilde{b}_{k-1},\ldots,\widetilde{b_0})}
         {\sgn(\widetilde{b}_0,\ldots,\widetilde{b}_{k-1},a(b_{k-1}),\ldots,a(b_0))} 
         =(-1)^k
\end{aligned}
$$
where the second-to-last equality used
\eqref{antipode-is-degree-one} in the numerator, and the last equality is because
reversing $2k$ letters can be done via $k$ transpositions, with sign $(-1)^k$.


\begin{thebibliography}{99}
\bibitem{Adin}
R.M.\ Adin,
Counting colorful multi-dimensional trees,
{\it Combinatorica}  {\bf 12} (1992), 247--260.

\bibitem{Artin}
M.\ Artin,
{\it Algebra}, 2nd edition.
Pearson, 2010.

\bibitem{ALRS}
C.A.\ Athanasiadis, J.A.\ DeLoera, V.\ Reiner, and F.\ Santos,
Fiber polytopes for the projections between cyclic polytopes.
{\it European J.\ Combin.} {\bf 21} (2000), 19--47.

\bibitem{BajoBurdickChmutov}
C.\ Bajo, B.\ Burdick, and S.\ Chmutov,
On the Tutte-Krushkal-Renardy polynomial for cell complexes,
{\tt arXiv:1204.356}.

\bibitem{Bjorner}
A.\ Bj\"orner, 
Posets, regular CW-complexes and Bruhat order.
{\it European J.\ Combin.} {\bf 5} (1984), no.\ 1, 7--16. 

\bibitem{OMBook}
A.\ Bj\"orner, M.\ Las~Vergnas, B.\ Sturmfels, N.\ White, and G.M.\ Ziegler,
Oriented matroids, Second edition. 
{\it Encyclopedia of Mathematics and its Applications} {\bf 46}. 
Cambridge University Press, Cambridge, 1999.

\bibitem{Bolker}
E.D.\ Bolker,
Simplicial geometry and transportation polytopes,
\textit{Trans.\ Amer.\ Math.\ Soc.} {\bf 217} (1976), 121--142. 

\bibitem{CCK}
M.J.\ Catanzaro, V.Y.\ Chernyak, and J.R.\  Klein,
Kirchhoff's theorems in higher dimensions and Reidemeister torsion,
\href{http://arxiv.org/abs/1206.6783}{arXiv:1206.6783}.

\bibitem{Coxeter}
H.S.M.\ Coxeter, 
Regular polytopes, Third edition. 
Dover Publications, Inc., New York, 1973.

\bibitem{DKM-Simplicial}
A.M.\ Duval, C.J.\ Klivans, and J.L.\ Martin,
Simplicial matrix-tree theorems,
\textit{Trans.\ Amer.\ Math.\ Soc.}\ {\bf 361} (2009), 6073--6114.

\bibitem{DKM-Cellular}
A.M.\ Duval, C.J.\ Klivans, and J.L.\ Martin,
Cellular spanning trees and Laplacians of cubical complexes,
\textit{Adv.\ Appl.\ Math.}\ {\bf 46} (2011), 247--274.

\bibitem{DKM-Cutflow}
A.M.\ Duval, C.J.\ Klivans, and J.L.\ Martin,
Cuts and flows of cell complexes,
{\tt arXiv:1206.6157}.

\bibitem{DuvalR}
A.M.\ Duval and V.\ Reiner, 
Shifted simplicial complexes are Laplacian integral. 
{\it Trans.\ Amer.\ Math.\ Soc.} {\bf 354} (2002),  4313--4344.

\bibitem{Friedman}
J.\ Friedman,
Computing Betti numbers via combinatorial Laplacians,
{\it Algorithmica } {\bf 21} (1998), 331--346. 

\bibitem{Friedrich}
T.\ Friedrich, 
Dirac operators in Riemannian geometry. 
{\it Graduate Studies in Mathematics} {\bf 25}. 
American Mathematical Society, Providence, RI, 2000. 

\bibitem{FritschPiccinini}
R.\ Fritsch and R.A.\ Piccinini,
Cellular structures in topology.
{\it Cambridge Studies in Advanced Mathematics} {\bf 19}. 
Cambridge University Press, Cambridge, 1990. 

\bibitem{Hatcher}
Allen Hatcher,
\textit{Algebraic Topology.}
Cambridge University Press, Cambridge, 2001.

\bibitem{Humphreys}
J.E.\ Humphreys, 
Reflection groups and Coxeter groups.
{\it Cambridge Studies in Advanced Mathematics} {\bf 29},
Cambridge University Press, Cambridge, 1990. 

\bibitem{Kalai}
G.\ Kalai,
Enumeration of $\mathbb{Q}$-acyclic complexes,
{\it Israel J.\ Math.} {\bf 45} (1983), 337--351.

\bibitem{Knill}
O.\ Knill,
Cauchy-Binet for pseudo-determinants,
{\it Linear Algebra Appl.} {\bf 459} (2014), 522--547. 

\bibitem{KrushkalRenardy}
V.\ Krushkal and D.\ Renardy,
A polynomial invariant and duality for triangulations,
{\it Electronic J.\ Combin.} {\bf 21} (2014), no.~3, Paper 3.42, 22 pp.

\bibitem{Lang}
S.\ Lang,
Algebra, Revised third edition. 
{\it Graduate Texts in Mathematics} {\bf 211}. 
Springer-Verlag, New York, 2002. 

\bibitem{LundellWeingram}
A.T.\  Lundell and S.\ Weingram, 
The topology of CW-complexes,
Van Nostrand Reinhold Co., 1969.

\bibitem{Lyons}
R.\ Lyons,
Random complexes and $\ell^2$-Betti numbers,
\emph{J.\ Topol.\ Anal.}\ {\bf 1} (2009), 153--175.

\bibitem{Maxwell}
M.\ Maxwell,
Enumerating bases of self-dual matroids,
\emph{J.\ Combin.\ Theory Ser.\ A} 116 (2009), 351--378.

\bibitem{Minka}
T.\ Minka,
Inferring a Gaussian distribution,
{\it MIT Media Lab note}, 1998; available at \hfill $\mathrm{}$
{\tt http://research.microsoft.com/en-us/um/people/minka/papers/gaussian.html}.

\bibitem{Munkres}
J.R.\ Munkres,
Elements of algebraic topology,
Addison-Wesley Publishing Company, Menlo Park, CA, 1984.

\bibitem{Oxley}
J.\ Oxley,
Matroid theory, Second edition. 
{\it Oxford Graduate Texts in Mathematics} {\bf 21}. 
Oxford University Press, Oxford, 2011.

\bibitem{Petersson}
Anna Petersson,
\textit{Enumeration of spanning trees in simplicial complexes},
Masters thesis, Uppsala University, 2008.

\bibitem{Schmid}
J.\ Schmid, 
A remark on characteristic polynomials,
{\it Amer.\ Math.\ Monthly} {\bf 77} (1970), 998--999.

\bibitem{Simon}
B.\ Simon,
Trace ideals and their applications, Second edition. 
{\it  Mathematical Surveys and Monographs} {\bf 120}. 
American Mathematical Society, Providence, RI, 2005. 

\bibitem{Tutte}
W.T.\ Tutte,
On the spanning trees of self-dual maps,
Second International Conference on Combinatorial Mathematics (New York, 1978), pp.\ 540--548, Ann.\ New York Acad.\ Sci.\ {\bf 319}, New York Acad. Sci., New York, 1979.

\bibitem{Ziegler}
G.M.\ Ziegler,
Lectures on polytopes,
{\it Graduate Texts in Mathematics} {\bf 152},
Springer-Verlag, New York, 1995.

\end{thebibliography}
\end{document}